\newcommand*{\doi}[1]{\href{\detokenize{#1}}{doi: \detokenize{#1}}}
\renewcommand*{\backref}[1]{}
\renewcommand*{\backrefalt}[4]{%
	\ifcase #1 (Not cited.)%
	\or        (Cited on page~#2.)%
	\else      (Cited on pages~#2.)%
	\fi}
\def\cM{{\mathcal{M}}}
\newcommand{\B}{\mathcal{B}}
\def \F {{\mathbb F}}
\def \N {{\mathbb N}}
\newtheorem{openproblem}{Open problem}
\numberwithin{equation}{section}
\begin{document}
\title{Bent functions satisfying the dual bent condition and permutations with the $(\mathcal{A}_m)$ property
\thanks{The first version of this work~\cite{PPKZ_BFA23} was presented in the ``Eighth International Workshop on Boolean Functions and their Applications (BFA 2023)''.}
}

\titlerunning{Bent functions satisfying the dual bent condition}

\author{Alexandr Polujan \and  Enes Pasalic \and Sadmir Kudin \and Fengrong Zhang}

\authorrunning{A. Polujan, E. Pasalic, S. Kudin, F. Zhang}

\institute{
			Alexandr Polujan \at
			Otto von Guericke University, Universit\"{a}tsplatz 2, 39106, Magdeburg, Germany \\
			\email{alexandr.polujan@ovgu.de}
			\and
            Enes Pasalic \at 
            University of Primorska, FAMNIT \& IAM, Glagolja\v{s}ka 8, 6000 Koper, Slovenia \\
            \email{enes.pasalic6@gmail.com}
            \and
            Sadmir Kudin \at 
            University of Primorska, FAMNIT \& IAM, Glagolja\v{s}ka 8, 6000 Koper, Slovenia \\
            \email{sadmir.kudin@iam.upr.si}
            \and 
           Fengrong Zhang \at
              State Key Laboratory of Integrated Services Networks, Xidian University, Xian 710071, P.R. China\\
              \email{zhfl203@163.com}              
}

\date{Received: date / Accepted: date}

\maketitle

\begin{abstract}
The concatenation of four Boolean bent functions $f=f_1||f_2||f_3||f_4$ is bent if and only if the dual bent condition  $f_1^* + f_2^* + f_3^* + f_4^* =1$ is satisfied. However, to specify four bent  functions satisfying this duality condition is in general quite a  difficult task. Commonly, to simplify this  problem, certain connections between $f_i$ are assumed,  as well as functions $f_i$ of a special shape are considered, e.g., $f_i(x,y)=x\cdot\pi_i(y)+h_i(y)$ are Maiorana-McFarland bent functions. In the case when permutations $\pi_i$ of $\F_2^m$ have the $(\mathcal{A}_m)$ property and Maiorana-McFarland bent functions $f_i$ satisfy the additional condition $f_1+f_2+f_3+f_4=0$, the dual bent condition is known to have a relatively simple shape allowing to specify the functions $f_i$ explicitly. In this paper, we generalize this result for the case when Maiorana-McFarland bent functions $f_i$ satisfy the condition $f_1(x,y)+f_2(x,y)+f_3(x,y)+f_4(x,y)=s(y)$ and provide a construction of new permutations with the $(\mathcal{A}_m)$ property from the old ones. Combining these two results, we obtain a recursive construction method of bent functions satisfying the dual bent condition. Moreover, we provide a generic condition on the Maiorana-McFarland bent functions $f_1,f_2,f_3,f_4$ stemming from the permutations of $\F_2^m$ with the $(\mathcal{A}_m)$ property, such that the concatenation $f=f_1||f_2||f_3||f_4$ does not belong, up to equivalence, to the Maiorana-McFarland class. Using monomial permutations $\pi_i$ of $\F_{2^m}$ with the $(\mathcal{A}_m)$ property and monomial functions $h_i$ on $\F_{2^m}$, we provide explicit constructions of such bent functions; a particular case of our result shows how one can construct bent functions from APN permutations, when $m$ is odd. Finally, with our construction method, we explain how one can construct homogeneous cubic bent functions, noticing that only very few design methods of these objects are known.
\keywords{Boolean bent function \and Dual bent condition \and Maiorana-McFarland class \and Bent 4-concatenation \and Equivalence.}
\subclass{11T06 \and 14G50}
\end{abstract}
\section{Introduction}
Bent functions are fascinating combinatorial objects that has been introduced by Rothaus in~\cite{Rot} and since then have been attracting a lot of attention from the research community for the past four decades~\cite{FourDecades}. There exists many constructions of bent functions (we refer to~\cite{Mesnager}, for comprehensive surveys and extensive references on the subject), which could be divided into \textit{primary constructions}, e.g., Maiorana-McFarland $\mathcal{M}$ and Partial Spread $\mathcal{PS}$~\cite{Dillon,Rot}, $\mathcal{C}$ and $\mathcal{D}$ classes of Carlet~\cite{CC93}, to name a few, as well as \textit{secondary constructions}, which employ the already known bent functions to construct new ones~\cite{Carlet_Secondary,Hodzic2020,Rot,ZWPC_Secondary}.

A well-known secondary construction of bent functions is the so-called \textit{bent 4-concatenation}:  given four bent functions $f_1,f_2,f_3,f_4$ in $n$ variables, one may try to concatenate their truth tables such that the resulting function $f$ is a bent function in $n+2$ variables. The first systematic study of this secondary construction was given in~\cite{Decom}, where the authors investigated under which conditions a given bent function $f$ in $n+2$ variables can be written as a concatenation of four bent functions $f_1,f_2,f_3,f_4$ in $n$ variables; in~\cite{Decom}, such a representation was called a bent \textit{4-decomposition}. Notably, in~\cite[Theorem 7]{Decom}, there was provided a criterion for a bent function $f$ to admit a bent 4-decomposition in terms of the second-order derivatives of the dual bent function $f^*$. While the analysis of bent 4-decompositions is useful for distinguishing different functions, the reverse process can be used for the construction of new families of bent functions. In~\cite{HPZ2019}, it was shown that for a concatenation of four bent functions $f=f_1||f_2||f_3||f_4$, the necessary and sufficient condition that $f$ is bent is  that the \textit{dual bent condition} is satisfied, i.e., $f_1^*+  f_2^* + f_3^*   +  f_4^* =1$, see~\cite[Theorem III.1]{HPZ2019}; note that this result (but in different formulations) was also obtained in~\cite{Preneel91,Tokareva2011AMC}. In~\cite{Mesnager}, Mesnager provided generic secondary constructions of bent functions as well as several explicit infinite families of bent functions and their duals using the permutations of $\F_2^m$ with the $(\mathcal{A}_m)$ property; the latter was formally introduced later in~\cite{Mesnager2014Am} as follows. Permutations  $\pi_1,\pi_2,\pi_3$ satisfy the $(\mathcal{A}_m)$ property if $\pi=\pi_1+\pi_2 + \pi_3$  is  a permutation and its inverse is given by $\pi^{-1}=\pi_1^{-1} + \pi_2^{-1} + \pi_3^{-1}$. Despite the fact, that in general it is difficult to determine explicitly permutations satisfying the strong condition~$(\mathcal{A}_m)$, new classes of such mappings were constructed in~\cite{Mesnager2014Am} and also used to provide more constructions of bent functions using involutions~\cite{Mesnager2015}.

While finding bent functions satisfying the dual bent condition and constructing permutations with the $(\mathcal{A}_m)$ property are two difficult independent open problems (as indicated in~\cite{SHCF,Mesnager2014Am,PPKZ2023}), their consideration ``as a whole'' was shown to be a potential source for the new secondary constructions of bent functions~\cite{CCDS2019}.

The main aim of this paper is three-fold. Firstly, to develop further the construction techniques of permutations with the $(\mathcal{A}_m)$ property by extending the results in~\cite{CCDS2019}. Secondly, based on the obtained permutations $\pi_i$ with the $(\mathcal{A}_m)$ property, to specify Maiorana-McFarland bent functions $f_i(x,y)=x\cdot\pi_i(y)+h_i(y)$ which satisfy the dual bent condition, and, hence, whose bent 4-concatenation gives rise to new bent functions. Finally, to show that the obtained constructions of bent functions are indeed ``new'', in the sense that the obtained constructions are different from the building blocks used in the concatenation (formally it means that $f=f_1||f_2||f_3||f_4$ is outside the completed Maiorana-McFarland class $\cM^\#$). 

The rest of the paper is organized in the following way. In Subsection~\ref{subsec: 1.1 Prelim}, we give basic definitions related to Boolean functions, and in Subsection~\ref{subsec: 1.2 Dual bent and Am} we summarize important definitions and results regarding the dual bent condition and permutations with the $(\mathcal{A}_m)$ property. In Section~\ref{sec: 2 Secondary constructions}, we provide secondary constructions of permutations with the $(\mathcal{A}_m)$ property and use them to construct bent functions by concatenating the corresponding Maiorana-McFarland bent functions satisfying the dual bent condition. To be more precise, in Subsection~\ref{subsec: 2.2 Permutations with Am new from old}, we provide a secondary construction of permutations with the $(\mathcal{A}_m)$ property using the piecewise permutations. In Subsection~\ref{subsec: 2.1 Concatenating MM generalized}, we first generalize~\cite[Theorem 7]{CCDS2019} (see Theorem~\ref{th: Frobenius extended}), which gives a possibility to concatenate larger classes of Maiorana-McFarland bent functions and to get in such a way new bent functions.

In the remaining sections, we investigate, under which conditions concatenation of Maiorana-McFarland bent functions is again bent and does not belong, up to equivalence, to the Maiorana-McFarland class. In Section~\ref{sec: 3}, we provide more conditions on permutations $\pi_i$ such that the concatenation  $f=f_1||f_2||f_3||f_4$ of Maiorana-McFarland bent functions  $f_i(x,y)=x\cdot\pi_i(y)+h_i(y)$ is outside the completed Maiorana-McFarland class $\cM^\#$. In Section~\ref{sec: 4}, we provide explicit construction methods so that the concatenation of four  Maiorana-McFarland bent functions on $\F_2^n$ (using suitable permutation monomials) generate bent functions on $\F_2^{n+2}$ outside $\cM^\#$. 
In this way, we provide a solution to~\cite[Open Problem 5.16]{PPKZ2023}. In Section~\ref{sec: 5 Hom bent}, we show that it is possible to construct homogeneous cubic bent functions using our construction methods (note that only a few construction methods of such functions are known in the literature) but due to the underlying difficulty of this problem further research efforts are required. Finally, we conclude the paper in Section~\ref{sec: 6 Conclusion}.

\subsection{Preliminaries}\label{subsec: 1.1 Prelim}
Let $\mathbb{F}_2^n$ be the vector space of all binary $n$-tuples $x=(x_1,\ldots,x_n)$, where $x_i \in \mathbb{F}_2$. For two elements $x=(x_1,\ldots,x_n)$ and $y=(y_1,\ldots,y_n)$ of $\mathbb{F}^n_2$, we define the scalar product over $\mathbb{F}_2$ as $x\cdot y=x_1 y_1 + \cdots +  x_n y_n$. For $x=(x_1,\ldots,x_n)\in \mathbb{F}^n_2$, the Hamming weight of $x$ is defined by $wt(x)=\sum^n_{i=1} x_i$. Throughout the paper, we denote by $0_n=(0,0,\ldots,0)\in \mathbb{F}^n_2$ the all-zero vector with $n$ coordinates. If necessary, we endow $\F_2^n$ with the structure of the finite field $\left(\F_{2^{n}},+,\cdot \right)$. An element $\alpha \in \mathbb{F}_{2^n}$ is called a \emph{primitive element}, if it is a generator of the multiplicative group $\mathbb{F}_{2^n}^*$. The \emph{absolute trace} $Tr\colon \mathbb{F}_{2^n} \rightarrow \mathbb{F}_{2}$ is given by $Tr(x) =\sum_{i=0}^{n-1} x^{2^{i}}$.

In this paper, we denote the set of all Boolean functions in $n$ variables by $\mathcal{B}_n$. One can uniquely represent any Boolean function $f\in\mathcal{B}_n$ using the \textit{algebraic normal form (ANF, for short)}, which is given by $f(x_1,\ldots,x_n)=\sum_{u\in \mathbb{F}^n_2}{\lambda_u}{(\prod_{i=1}^n{x_i}^{u_i})}$, where $x_i, \lambda_u \in \mathbb{F}_2$ and $u=(u_1, \ldots,u_n)\in \mathbb{F}^n_2$.
	The \textit{algebraic degree} of $f\in\mathcal{B}_n$, denoted by $\deg(f)$, is the maximum Hamming weight of $u \in \F_2^n$ for which $\lambda_u \neq 0$ in its ANF. A Boolean function $f\in\mathcal{B}_n$ is called \textit{homogeneous} if all the monomials in its ANF have the same algebraic degree.
	
	The \textit{first order-derivative} of a function $f\in\mathcal{B}_n$ in the direction $a \in \F_2^n$ is the mapping $D_{a}f(x)=f(x+a) +  f(x)$. Derivatives of higher orders are defined recursively, i.e., the \emph{$k$-th order derivative} of a function $f\in\mathcal{B}_n$ is defined by $D_Vf(x)=D_{a_k}D_{a_{k-1}}\ldots D_{a_1}f(x)=D_{a_k}(D_{a_{k-1}}\ldots D_{a_1}f)(x)$, where $V=\langle a_1,\ldots,a_k \rangle$ is a vector subspace of $\F_2^n$ spanned by elements $a_1,\ldots,a_k\in\F_2^n$. An element $a\in\F_2^n$ is called a \textit{linear structure} of $f\in\mathcal{B}_n$, if $f(x+a)+f(x)=const\in \F_2$ for all $x\in\F_2^n$. We say that  $f\in\mathcal{B}_n$ \textit{has no linear structures}, if $0_n$ is the only linear structure of $f$. 
	
	The \emph{Walsh-Hadamard transform} (WHT) of $f\in\mathcal{B}_n$, and its inverse WHT, at any point $a\in\mathbb{F}^n_2$ are defined, respectively, by
	\begin{equation*}
		W_{f}(a)=\sum_{x\in \mathbb{F}_2^n}(-1)^{f(x) +  a\cdot x} \quad\mbox{and}\quad
		(-1)^{f(x)}=2^{-n}\sum_{a\in \mathbb{F}_2^n}W_f(a)(-1)^{a\cdot x}.
	\end{equation*}
	
	For even $n$, a function $f\in\mathcal{B}_n$ is called {\em bent} if $W_f(u)=\pm2^{\frac{n}{2}}$ for all $u\in\F_2^n$. For a bent function $f\in\mathcal{B}_n$, a Boolean function $f^*\in \mathcal{B}_n$ defined by $W_f(u)=2^{\frac{n}{2}}(-1)^{f^*(u)}$ for all $u\in\F_2^n$ is a bent function, called the {\it dual} of $f$. 
 Two Boolean functions $f,f'\in\mathcal{B}_n$ are called \textit{(extended-affine) equivalent}, if there exists an affine permutation $A$ of $\F_2^n$ and an affine function $l\in\mathcal{B}_n$, such that $f\circ A + l= f'$. It is well-known, that extended-affine equivalence preserves the bent property. A class of bent functions $\mathit{B}_n \subset \mathcal{B}_n$ is \emph{complete} if it is globally invariant under extended-affine equivalence. By $\mathit{B}_n^\#$ we denote the \emph{completed  $\mathit{B}_n$ class}, which is the smallest possible complete class that  contains $\mathit{B}_n$.
	
		The \textit{Maiorana-McFarland class} $\cM$ is the set of $n$-variable ($n=2m$) Boolean bent functions of the form
	\[
	f(x,y)=x \cdot \pi(y)+ h(y), \mbox{ for all } x, y\in\F_2^m,
	\]
	where $\pi$ is a permutation on $\F_2^m$, and $h$ is an arbitrary Boolean function on
	$\F_2^m$. 
	Using  Dillon's criterion below (the proof of this statement can be found in~\cite{Carlet2021} or \cite[p. 19]{Polujan2020}), one can show that a given Boolean bent function $f\in\mathcal{B}_n$ does (not) belong to the completed Maiorana-McFarland class $\mathcal{M}^\#$.
	\begin{lemma} \cite[p. 102]{Dillon}\label{lem M-M second}
		Let $n=2m$. A Boolean bent function $f\in\mathcal{B}_n$ belongs to $\cM^{\#}$ if and only if
		there exists an $m$-dimensional linear subspace $U$ of $\F_2^n$ such that the second-order derivatives
		$ D_{a}D_{b}f(x)=f(x) +  f(x +  a) +  f(x +  b) +  f(x +  a +  b)$
		vanish for any $ a,  b \in U$.
	\end{lemma}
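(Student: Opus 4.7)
The plan is to prove both implications by reducing to the standard Maiorana-McFarland form via an affine change of coordinates, and then in the reverse direction exploiting bentness to force the underlying map to be a permutation.

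For the forward direction, I would write $f = g \circ A + l$ with $g(x,y) = x \cdot \pi(y) + h(y) \in \cM$, an affine permutation $A$ of $\F_2^n$ with linear part $A_0$, and an affine function $l$. On the $m$-dimensional subspace $U_0 = \F_2^m \times \{0_m\}$, the function $g$ is affine in $x$ on each coset, so $D_a D_b g \equiv 0$ for all $a, b \in U_0$. Using the identity $D_a D_b (g \circ A) = (D_{A_0 a} D_{A_0 b} g) \circ A$ together with $D_a D_b l \equiv 0$ for affine $l$, the $m$-dimensional subspace $U = A_0^{-1}(U_0)$ satisfies the desired vanishing condition for $f$.

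For the reverse direction, I would pick an affine permutation $A$ with $A_0(U_0) = U$; after replacing $f$ by $f \circ A$, which preserves both bentness and (via the same transformation rule) shifts the vanishing condition onto $U_0$, I may assume $U = U_0$. Fixing $y \in \F_2^m$ and varying $x$, all second-order derivatives of $f(\cdot, y)$ vanish in every direction of $\F_2^m$, so $f(\cdot, y)$ is affine in $x$; hence there exist $\pi \colon \F_2^m \to \F_2^m$ and $h \colon \F_2^m \to \F_2$ such that
\[
f(x,y) = x \cdot \pi(y) + h(y).
\]

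The main step, and the place where bentness is essential, is to show that $\pi$ is a permutation. For $a \in \F_2^m$, a direct computation gives
\[
W_f(a, 0_m) \;=\; \sum_{y \in \F_2^m} (-1)^{h(y)} \sum_{x \in \F_2^m} (-1)^{x \cdot (\pi(y) + a)} \;=\; 2^m \sum_{y \in \pi^{-1}(a)} (-1)^{h(y)}.
\]
Bentness forces $|W_f(a, 0_m)| = 2^m$, so $\pi^{-1}(a) \neq \emptyset$ for every $a \in \F_2^m$; combined with $\sum_{a \in \F_2^m} |\pi^{-1}(a)| = 2^m$, each preimage has size exactly one, so $\pi$ is a permutation. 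Thus $f$ lies in $\cM$ in the new coordinates, and the original function lies in $\cM^{\#}$.
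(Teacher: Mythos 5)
Your proof is correct. The paper does not prove this lemma itself (it is quoted from Dillon's thesis, with proofs referenced in Carlet's book and Polujan's thesis), and your argument — affine invariance of the vanishing-second-derivative condition for the forward direction, then reduction to $U=\F_2^m\times\{0_m\}$, affineness in $x$, and the Walsh computation $W_f(a,0_m)=2^m\sum_{y\in\pi^{-1}(a)}(-1)^{h(y)}$ forcing $\pi$ to be a permutation — is exactly the standard proof of Dillon's criterion.
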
 
	Following the terminology in~\cite{PPKZ2023,Polujan2020}, for a  function $f\in\mathcal{B}_n$, we call a vector subspace $U$ such that $D_a D_b f=0$ for all $a,b\in U$ an \textit{$\mathcal{M}$-subspace} of $f$. Note that if $f\in\mathcal{M}$, then at least one $\mathcal{M}$-subspace of $f$ has the form $U=\F_2^m \times \{0_m\}$, which we call the \textit{canonical $\mathcal{M}$-subspace} of $f$. However, if $f\in\cM^\#$ then this is not true in general.
	
	
\subsection{The dual bent condition and the $(\mathcal{A}_m)$ property}\label{subsec: 1.2 Dual bent and Am}
In this subsection, we introduce the main terminology and summarize several important results regarding the bent 4-concatenation, the dual bent condition and permutations of $\F_2^m$ with the $(\mathcal{A}_m)$ property.

Throughout the paper,  we denote the \textit{(canonical) concatenation} of four functions $f_i \in \B_n$  as  $f=f_1||f_2||f_3||f_4 \in \B_{n+2}$, whose ANF is given by
\begin{equation}\label{eq:ANF_4conc}
	\begin{split}
	f(z,z_{n+1},z_{n+2})=&f_1(z) +  z_{n+1}(f_1 +  f_3)(z) +  z_{n+2}(f_1 +  f_2)(z)\\ + &  z_{n+1}z_{n+2}(f_1 +  f_2 +  f_3 +  f_4)(z).
	\end{split}
\end{equation}
Notice that the subfunctions $f_i$ of $f$ are defined on the four cosets of $\F_2^n$ so that $f_1(z)=f(z,0,0), \ldots,  f_4(z)=f(z,1,1)$.
Conversely, as shown in \cite{Decom}, any bent function on $\F_2^{n+2}$ can be decomposed  into four subfunctions  $f_i \in \F_2^n$, where all $f_i$ are bent, disjoint spectra semi-bent functions or suitable 5-valued spectra functions. The latter two cases have been recently analyzed in \cite{Bent_Decomp2022}, where efficient methods of designing bent functions outside $\cM^\#$ were proposed. 
 For a concatenation of four bent functions $f=f_1||f_2||f_3||f_4$, the necessary and sufficient condition that $f$ is bent is  that the \textit{dual bent condition} is satisfied~\cite[Theorem III.1]{HPZ2019}, i.e., $f_1^*+  f_2^* + f_3^*   +  f_4^* =1$. For our purpose, one  construction method of bent functions satisfying the dual bent condition given in Theorem \ref{th:cond_on_h} below, using the permutations of $\F_2^m$ with the $(\mathcal{A}_m)$ property (defined below), is of particular importance.

\begin{definition}\cite{Mesnager2014_Several_New}
	Let $\pi_1,\pi_2,\pi_3$ be three permutations of $\F_2^m$. We say that $\pi_1,\pi_2,\pi_3$ have the $(\mathcal{A}_m)$ property if
	\begin{enumerate}
		\item $\pi_4=\pi_1+\pi_2 + \pi_3$  is  a permutation and
		\item $\pi_4^{-1}=\pi_1^{-1} + \pi_2^{-1} + \pi_3^{-1} $.
	\end{enumerate} 
\end{definition}
As the following result shows, the dual bent condition could be satisfied~\cite{CCDS2019} by using Maiorana-McFarland bent functions arising from permutations with the $(\mathcal{A}_m)$ property~\cite{Mesnager2014Am}.
\begin{theorem}\cite[Theorem 7]{CCDS2019}\label{th:cond_on_h}
	Let $ f_j (x, y) = Tr(x \pi_j (y)) + h_j (y)$ for $ j \in  \{1, 2, 3\}$ and $x, y \in
	\F_{2^{m}}$, where the permutations $\pi_j$ satisfy the condition ($\mathcal{A}_m$). If the functions $h_j$ satisfy
	\begin{equation}\label{eq:dualconditiononh}
		h_1(\pi^{-1}_1 (y))+h_2(\pi^{-1}_2 (y))+h_3(\pi^{-1}_3 (y))+(h_1+h_2+h_3)((\pi_1+\pi_2+\pi_3)^{-1}(y)) = 1, 
	\end{equation}
	then $f_1, f_2, f_3$ satisfy $f^*_1 + f^*_2 + f^*_3 + f^*_4=1$, where $f_1+f_2 +f_3=f_4$. Consequently, $f=f_1||f_2||f_3||f_4$ is bent.
\end{theorem}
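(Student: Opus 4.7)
The plan is to compute $f_1^*+f_2^*+f_3^*+f_4^*$ explicitly and show that, under the $(\mathcal{A}_m)$ hypothesis, the trace-parts cancel and only the $h_j$-terms survive, reducing the dual bent condition to the hypothesis \eqref{eq:dualconditiononh}. Then an appeal to the characterization of bent 4-concatenations via the dual bent condition (Theorem III.1 of \cite{HPZ2019}) finishes the argument.

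First, I would record the standard formula for the dual of a Maiorana-McFarland bent function: if $g(x,y)=\Tr(x\,\sigma(y))+\ell(y)$ with $\sigma$ a permutation of $\F_{2^m}$, then $g^*(x,y)=\Tr(y\,\sigma^{-1}(x))+\ell(\sigma^{-1}(x))$. This is a short Walsh transform calculation that I would briefly justify (fixing $x$ and summing over $y$ in $W_g(a,b)$ forces $\sigma(y)=a$, i.e.\ $y=\sigma^{-1}(a)$). Applied to each $f_j$ for $j\in\{1,2,3\}$ this gives $f_j^*(x,y)=\Tr(y\,\pi_j^{-1}(x))+h_j(\pi_j^{-1}(x))$.

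Next, because $f_4=f_1+f_2+f_3$, we have
\[
f_4(x,y)=\Tr\bigl(x\,(\pi_1+\pi_2+\pi_3)(y)\bigr)+(h_1+h_2+h_3)(y).
\]
The first item of $(\mathcal{A}_m)$ ensures $\pi_4:=\pi_1+\pi_2+\pi_3$ is a permutation, so $f_4$ is again Maiorana-McFarland bent, and by the same dual formula
\[
f_4^*(x,y)=\Tr(y\,\pi_4^{-1}(x))+(h_1+h_2+h_3)(\pi_4^{-1}(x)).
\]
Summing the four duals,
\[
\sum_{j=1}^{4}f_j^*(x,y)=\Tr\!\Bigl(y\bigl(\pi_1^{-1}+\pi_2^{-1}+\pi_3^{-1}+\pi_4^{-1}\bigr)(x)\Bigr)+\sum_{j=1}^{3}h_j(\pi_j^{-1}(x))+(h_1+h_2+h_3)(\pi_4^{-1}(x)).
\]
Now the second item of $(\mathcal{A}_m)$ gives $\pi_4^{-1}=\pi_1^{-1}+\pi_2^{-1}+\pi_3^{-1}$, so the argument of the trace vanishes identically in $x$, killing the trace term for every $y$. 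What remains is exactly the left-hand side of \eqref{eq:dualconditiononh} evaluated at $x$, which equals $1$ by hypothesis. Hence $f_1^*+f_2^*+f_3^*+f_4^*=1$, and \cite[Theorem III.1]{HPZ2019} yields that $f=f_1\|f_2\|f_3\|f_4$ is bent.

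There is no real obstacle here; the only subtlety worth flagging is the interplay of the two conditions in $(\mathcal{A}_m)$. The first is needed to conclude that $f_4$ is itself Maiorana-McFarland bent (so that its dual has the explicit form used above), while the second is the crucial algebraic identity that collapses the sum of the four linear-in-$x$ trace contributions to zero. If either condition is dropped, the argument breaks: without the first, $f_4^*$ need not even exist in this form; without the second, a nonzero linear-in-$y$ residue would remain and could not be absorbed into \eqref{eq:dualconditiononh}.
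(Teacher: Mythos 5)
Your proof is correct and follows essentially the same route as the paper, which establishes this statement (as the special case $s=0$) in the proof of its Theorem~\ref{th: Frobenius extended}: write each $f_j^*$ via the standard Maiorana--McFarland dual formula, use condition (2) of $(\mathcal{A}_m)$ to cancel the trace terms in $f_1^*+f_2^*+f_3^*+f_4^*$, reduce to \eqref{eq:dualconditiononh}, and invoke \cite[Theorem III.1]{HPZ2019}. The only nitpick is in your parenthetical justification of the dual formula: one fixes $y$ and sums over $x$ (not the other way around) to force $\sigma(y)=a$; the formula you then use is nonetheless correct.
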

Notice that $f_4$ is defined as $f_4=f_1+f_2+f_3$ and the  functions $h_i(y)$ must be carefully selected (assuming that $\pi_1,\pi_2,\pi_3$ satisfy the ($\mathcal{A}_m$) property) so that \eqref{eq:dualconditiononh} is satisfied.
With the following recent result, it is possible to show that in certain cases the bent 4-concatenation of Maiorana-McFarland bent functions gives a bent function outside $\cM^\#$.

\begin{theorem}\cite{PPKZ2023} \label{th:sharingcommonsubspace}Let $f_1, \ldots,f_4$ be four bent  functions on $\F_2^n$, with $n=2m$, satisfying the following conditions:
	\begin{enumerate}
		\item $f_1, \ldots,f_4$ belong to $\cM^\#$ and  share a unique  $\mathcal{M}$-subspace of dimension $m$;
		\item  $f=f_1||f_2||f_3||f_4 \in\B_{n+2}$ is a bent function;
	\end{enumerate}
	Let $V$ be an $(\frac{n}{2}-1)$-dimensional subspace  of $\F_2^n$  such that $D_aD_bf_i=0$, for all $a,b \in V$; $i=1, \ldots ,4.$
	If for any $v\in  \F_2^n $ and  any such $V \subset \F_2^n$, there exist $u^{(1)},u^{(2)},u^{(3)}\in V $ such that the following three conditions hold simultaneously 
	\begin{enumerate}
		\item[1)] $D_{u^{(1)}}f_1(x)+D_{u^{(1)}}f_2(x+v)\neq 0,~\textit{or}~D_{u^{(1)}}f_3(x)+D_{u^{(1)}}f_4(x+v)\neq 0,$
		\item[2)] $D_{u^{(2)}}f_1(x)+D_{u^{(2)}}f_3(x+v)\neq 0,~\textit{or}~D_{u^{(2)}}f_2(x)+D_{u^{(2)}}f_4(x+v)\neq 0,$
		\item[3)] $D_{u^{(3)}}f_2(x)+D_{u^{(3)}}f_3(x+v)\neq 0,~\textit{or}~D_{u^{(3)}}f_1(x)+D_{u^{(3)}}f_4(x+v)\neq 0,$
	\end{enumerate}
	then $f$ is outside $\cM^\#$.
\end{theorem}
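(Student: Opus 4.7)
The plan is to argue by contradiction using Dillon's criterion (Lemma~\ref{lem M-M second}). Suppose $f \in \cM^\#$; then there exists an $(m+1)$-dimensional subspace $W \subseteq \F_2^{n+2} \cong \F_2^n \oplus \F_2^2$ such that $D_aD_bf \equiv 0$ for all $a,b \in W$. I would introduce the projection $\rho \colon \F_2^{n+2} \to \F_2^2$ onto the last two coordinates and split the analysis according to $k := \dim \rho(W) \in \{0,1,2\}$, noting that $\dim(W \cap \ker\rho) = m+1-k$ and that $W \cap \ker\rho$ descends to a subspace of $\F_2^n$ which is automatically a common $\mathcal{M}$-subspace of $f_1,\ldots,f_4$, since for $a,b\in\F_2^n$ the restriction of $D_{(a,0,0)}D_{(b,0,0)}f$ to each of the four cosets of $\F_2^n$ inside $\F_2^{n+2}$ coincides with $D_aD_bf_i$ for the appropriate $i$.

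For $k=0$, the whole of $W$ descends to an $(m+1)$-dimensional common $\mathcal{M}$-subspace, contradicting the standard fact that every $\mathcal{M}$-subspace of a bent function on $\F_2^{2m}$ has dimension at most $m$. For $k=1$, one obtains an $m$-dimensional common $\mathcal{M}$-subspace, which by hypothesis 1 must coincide with the unique such subspace $U_0$; hence $W = (U_0\times\{(0,0)\}) \oplus \langle (v,e)\rangle$ for some $v \in \F_2^n$ and some $e \in \{(0,1),(1,0),(1,1)\}$. Expanding $D_{(u,0,0)}D_{(v,e)}f$ via the concatenation formula~\eqref{eq:ANF_4conc}, the requirement that this vanish for every $u \in U_0$ collapses into two identities of the form $D_uf_i(x) + D_uf_j(x+v) \equiv 0$, where the pair $\{i,j\}$ is determined by $e$: namely $(1,2)$ and $(3,4)$ for $e=(0,1)$; $(1,3)$ and $(2,4)$ for $e=(1,0)$; and $(1,4)$ and $(2,3)$ for $e=(1,1)$. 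Taking any $(m-1)$-dimensional $V \subseteq U_0$, which is automatically a common $\mathcal{M}$-subspace, and invoking conditions 1), 2), or 3) of the theorem respectively (matched to the value of $e$) with this $v$, one obtains $u \in V \subseteq U_0$ that breaks at least one of the required identities, a contradiction. For $k=2$, one obtains an $(m-1)$-dimensional common $\mathcal{M}$-subspace $V$, and lifting a basis of $\F_2^2$ to $W$ exhibits three generators $(v_1,(0,1)),(v_2,(1,0)),(v_1+v_2,(1,1)) \in W$ for some $v_1,v_2\in\F_2^n$; each generator reproduces the Case~$k=1$ requirement in its direction, and applying conditions 1), 2), 3) with $v = v_1, v_2, v_1+v_2$ respectively produces exactly the vectors $u^{(1)},u^{(2)},u^{(3)} \in V$ asserted in the theorem, again yielding a contradiction.

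The main bookkeeping obstacle is establishing the pairing between the three nonzero elements of $\F_2^2$ and the three conditions 1)--3), which reduces to a careful expansion of $D_aD_bf$ through~\eqref{eq:ANF_4conc} together with the observation that the derivative identity arising at coset $(y_1,y_2) = e$ of the $f_i$ is equivalent, after a shift $x \mapsto x+v$, to the identity at coset $(y_1,y_2) = (0,0)$. The uniqueness in hypothesis~1 is crucial to pin down the $m$-dimensional subspace appearing in the $k=1$ case to $U_0$, so that an $(m-1)$-dimensional hyperplane $V \subseteq U_0$ can legitimately be fed into the theorem's hypothesis, whereas in the $k=2$ case the dimension $m-1$ arises intrinsically from the projection.
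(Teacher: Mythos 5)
Your argument is correct and follows essentially the same route as the source: your case split on $\dim\rho(W)$ re-derives precisely the reduction that the paper imports as Proposition~\ref{prop:commonsubspace} (the theorem itself is quoted from \cite{PPKZ2023} without proof here), and your pairing of the three nonzero values of $e$ with conditions 1)--3) is exactly what the coset-wise second-order derivative expansion \eqref{eq:2ndderiv_conc correct} used in the proof of Theorem~\ref{th: Outside MM} yields. Nothing further is needed.
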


\section{Secondary constructions of bent functions satisfying the dual bent condition and permutations with the $(\mathcal{A}_m)$ property}
	\label{sec: 2 Secondary constructions}
	 Constructing new permutations of $\F_2^m$ satisfying  the $(\mathcal{A}_m)$ property  is a well-known challenging problem. In this section, we explain how one can construct permutations of $\F_2^{m+k}$ that satisfy the $(\mathcal{A}_{m+k})$ property from permutations of $\F_2^{m}$ with the $(\mathcal{A}_{m})$	property, where $k\in \N$ is arbitrary. Consequently, we use this construction method for specifying Maiorana-McFarland bent functions satisfying the dual bent condition. In this way, we obtain a secondary construction of bent functions.
	 \subsection{Permutations with the $(\mathcal{A}_m)$ property: New from old}\label{subsec: 2.2 Permutations with Am new from old}
	 Since our construction method of new permutations with the $(\mathcal{A}_m)$ property from old is based on the piecewise permutations, we need first to express the inverse of a permutation defined in a piecewise manner, which is required for checking the $(\mathcal{A}_m)$  property. Therefore, we begin this subsection with the following auxiliary result.
	 \begin{lemma}\label{eq: new permutation inverse}
	 	Let $\sigma_1,\sigma_2$ be permutations of $\F_2^m$. Define the mapping $\pi \colon \F_2^{m+1} \to \F_2^{m+1}$ by
	 	\begin{equation*}
	 		\pi(y,y_{m+1})= \left( y_{m+1}\sigma_1(y)+(1+y_{m+1}) \sigma_2(y) , y_{m+1} \right),
	 	\end{equation*}
	 	 for all $y \in \F_2^m, y_{m+1} \in \F_2$. Then, $\pi$ is a permutation, and its inverse on $\F_2^{m+1}$ is given by the permutation $\rho$ on $\F_2^{m+1}$, defined by
	 	\begin{equation*}
	 		\rho(y,y_{m+1})= \left( y_{m+1}\sigma_1^{-1}(y)+(1+y_{m+1}) \sigma_2^{-1}(y) , y_{m+1} \right), 
	 	\end{equation*}
 	for all  $y \in \F_2^m, y_{m+1} \in \F_2$.
	 \end{lemma}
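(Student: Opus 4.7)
The plan is to handle this by a clean case-split on the last coordinate $y_{m+1} \in \F_2$, since $\pi$ preserves that coordinate and acts as a different permutation of $\F_2^m$ on each of the two cosets. First I would observe that for any input, the last coordinate of $\pi(y,y_{m+1})$ is exactly $y_{m+1}$, so it suffices to check bijectivity on each slice $\F_2^m \times \{0\}$ and $\F_2^m \times \{1\}$ separately.

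Next, I would evaluate $\pi$ on each slice: substituting $y_{m+1}=1$ gives $\pi(y,1) = (\sigma_1(y), 1)$, and substituting $y_{m+1}=0$ gives $\pi(y,0) = (\sigma_2(y), 0)$. Since $\sigma_1$ and $\sigma_2$ are permutations of $\F_2^m$ by hypothesis, each slice maps bijectively to itself, so $\pi$ is a permutation of $\F_2^{m+1}$.

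For the inverse, I would compute $\rho \circ \pi$ directly on each slice using the same case analysis. On the slice $y_{m+1}=1$, we get $\rho(\pi(y,1)) = \rho(\sigma_1(y),1) = (\sigma_1^{-1}(\sigma_1(y)), 1) = (y,1)$, and analogously on $y_{m+1}=0$ we get $\rho(\pi(y,0)) = (\sigma_2^{-1}(\sigma_2(y)),0) = (y,0)$. Thus $\rho \circ \pi$ is the identity on $\F_2^{m+1}$, which identifies $\rho$ as $\pi^{-1}$.

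There is no real obstacle here; the only mild subtlety is being careful that the piecewise formula genuinely expresses the piecewise definition, which is immediate once one notes that exactly one of the two indicator factors $y_{m+1}$ and $1+y_{m+1}$ equals $1$ for each value of $y_{m+1} \in \F_2$. The argument is symmetric in $\sigma_1$ and $\sigma_2$, so the two cases are essentially the same computation.
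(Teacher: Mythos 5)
Your proposal is correct and follows essentially the same route as the paper: rewrite $\pi$ and $\rho$ piecewise according to the value of $y_{m+1}$, note each slice is handled by a permutation of $\F_2^m$, and verify that one composition is the identity (you check $\rho\circ\pi$, the paper checks $\pi\circ\rho$, which is immaterial since both maps are already shown to be bijections of a finite set). No gaps.
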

	 \begin{proof}
	 	First, it is convenient to rewrite $\pi$ and $\rho$ in the piecewise form as follows:
	 	\begin{equation*}
	 		\begin{split}
	 			\pi(y,y_{m+1})=&\begin{cases}
	 				(\sigma_1(y),1) & \text{if } y_{m+1}=1\\
	 				(\sigma_2(y),0) & \text{if } y_{m+1}=0
	 			\end{cases}\quad\mbox{and}\\ \rho(y,y_{m+1})=&\begin{cases}
	 				(\sigma_1^{-1}(y),1) & \text{if } y_{m+1}=1\\
	 				(\sigma_2^{-1}(y),0) & \text{if } y_{m+1}=0
	 			\end{cases}.
	 		\end{split}
	 	\end{equation*}
	 	Since $\sigma_1$ and $\sigma_2$ are permutations of $\F_2^m$, so are  $\sigma_1^{-1}$ and $\sigma_2^{-1}$, and hence $\pi$ and $\rho$ are permutations of $\F_2^{m+1}$. Now, we show that $\rho$ is indeed the inverse of the permutation $\pi$. We compute $\pi(\rho(y,y_{m+1}))$, whose expression is given as follows
	 	\begin{equation}
	 		\begin{split}
	 			\pi(\rho(y,y_{m+1}))&=\begin{cases}
	 				\pi(\rho(y,1)) & \text{if } y_{m+1}=1\\
	 				\pi(\rho(y,0)) & \text{if } y_{m+1}=0
	 			\end{cases}=\begin{cases}
	 				\pi(\sigma_1^{-1}(y),1) & \text{if } y_{m+1}=1\\
	 				\pi(\sigma_2^{-1}(y),0) & \text{if } y_{m+1}=0
	 			\end{cases}\\
	 			&=\begin{cases}
	 				(\sigma_1(\sigma_1^{-1}(y)),1) & \text{if } y_{m+1}=1\\
	 				(\sigma_2(\sigma_2^{-1}(y)),0) & \text{if } y_{m+1}=0
	 			\end{cases}=\begin{cases}
	 				(y,1) & \text{if } y_{m+1}=1\\
	 				(y,0) & \text{if } y_{m+1}=0
	 			\end{cases}.
	 		\end{split}
	 	\end{equation}
	 	Since the latter expression is equal to $\operatorname{id}(y,y_{m+1})$, we get that $\rho=\pi^{-1}$. \qed
	 \end{proof}
 	Now, we show that as soon as a single example of permutations $\pi_i$ on $\F_2^{m}$ with the $(\mathcal{A}_m)$ property is found (here $m$ is a fixed integer), then one can always construct many such examples on $\F_2^{m+k}$, where $k>0$ is an arbitrary integer. 	
 	\begin{proposition}\label{prop: Am permutations new from old}
 		Let $\pi_j$, $\sigma_j$ for $j \in\{1,2,3\}$ be  permutations on $\F_2^m$ which satisfy the condition $\left(\mathcal{A}_m\right)$. Denote by $\pi_4=\pi_1+\pi_2+\pi_3$, $\sigma_4=\sigma_1+\sigma_2+\sigma_3$. Define four permutations $\phi_i$ for $i\in\{1,2,3,4\}$ on $\F_2^{m+1}$ as
 		\begin{equation}\label{eq: Permutations phi}
 			\phi_i(y,y_{m+1})= \begin{cases}
 				(\pi_i(y),1) & \text{if } y_{m+1}=1\\
 				(\sigma_i(y),0) & \text{if } y_{m+1}=0
 			\end{cases},\quad \text{for all } y \in \F_2^m, y_{m+1} \in \F_2,
 		\end{equation}
 		 Then, permutations $\phi_1,\phi_2,\phi_3$ satisfy the condition $\left(\mathcal{A}_m\right)$.
 	\end{proposition}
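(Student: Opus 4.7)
The plan is to verify the two defining conditions of the $(\mathcal{A}_{m+1})$ property for $\phi_1,\phi_2,\phi_3$ directly, leveraging Lemma~\ref{eq: new permutation inverse} to handle the inverses of piecewise permutations cleanly.

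First, I would compute $\phi_4 := \phi_1 + \phi_2 + \phi_3$. Since each $\phi_i$ fixes the last coordinate $y_{m+1}$ and summing three copies of $y_{m+1}$ over $\F_2$ returns $y_{m+1}$, the last coordinate of $\phi_4(y,y_{m+1})$ is again $y_{m+1}$. On the first $m$ coordinates, on the slice $y_{m+1}=1$ the sum collapses to $\pi_1(y)+\pi_2(y)+\pi_3(y)=\pi_4(y)$, and on the slice $y_{m+1}=0$ it collapses to $\sigma_1(y)+\sigma_2(y)+\sigma_3(y)=\sigma_4(y)$. Thus $\phi_4$ is exactly the piecewise map built from $\pi_4$ and $\sigma_4$ in the form of Lemma~\ref{eq: new permutation inverse}. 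Because $\pi_1,\pi_2,\pi_3$ and $\sigma_1,\sigma_2,\sigma_3$ each satisfy $(\mathcal{A}_m)$, both $\pi_4$ and $\sigma_4$ are permutations of $\F_2^m$, and the lemma then gives that $\phi_4$ is a permutation of $\F_2^{m+1}$. This establishes condition~(1).

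For condition~(2), I would apply Lemma~\ref{eq: new permutation inverse} to each $\phi_i$ individually: its inverse is the piecewise permutation obtained by replacing $\pi_i,\sigma_i$ with $\pi_i^{-1},\sigma_i^{-1}$. Summing $\phi_1^{-1}+\phi_2^{-1}+\phi_3^{-1}$ in the same way as above, the last coordinate is again preserved, while the first $m$ coordinates produce $\pi_1^{-1}(y)+\pi_2^{-1}(y)+\pi_3^{-1}(y)$ on the slice $y_{m+1}=1$ and $\sigma_1^{-1}(y)+\sigma_2^{-1}(y)+\sigma_3^{-1}(y)$ on the slice $y_{m+1}=0$. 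By the $(\mathcal{A}_m)$ property of the $\pi_j$'s and of the $\sigma_j$'s, these sums equal $\pi_4^{-1}(y)$ and $\sigma_4^{-1}(y)$ respectively. A second application of Lemma~\ref{eq: new permutation inverse}, now to $\phi_4$, identifies this piecewise permutation with $\phi_4^{-1}$, giving $\phi_4^{-1}=\phi_1^{-1}+\phi_2^{-1}+\phi_3^{-1}$.

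Combining the two computations yields both clauses of Definition of $(\mathcal{A}_{m+1})$ for $\phi_1,\phi_2,\phi_3$. The argument is essentially a bookkeeping exercise once the lemma is in place; there is no real obstacle beyond being careful that the two uses of the lemma are applied to the correct pairs of component permutations, so that the piecewise form of $\phi_4^{-1}$ matches the piecewise sum of the $\phi_i^{-1}$.
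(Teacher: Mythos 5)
Your proposal is correct and follows essentially the same route as the paper: compute $\phi_4$ in piecewise form to see it is the Lemma-type permutation built from $\pi_4$ and $\sigma_4$, then use Lemma~\ref{eq: new permutation inverse} together with the $(\mathcal{A}_m)$ property of the $\pi_j$ and $\sigma_j$ to identify $\phi_4^{-1}$ with $\phi_1^{-1}+\phi_2^{-1}+\phi_3^{-1}$. The only cosmetic difference is the direction of the computation (you sum the $\phi_i^{-1}$ first and then recognize $\phi_4^{-1}$, while the paper expands $\phi_4^{-1}$ and recognizes the sum), which is immaterial.
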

 	\begin{proof}
 		The property $(\mathcal{A}_{m+1})$ means that for the permutations $\phi_i$ on $\F_2^{m+1}$, we have that $\phi_1+\phi_2 + \phi_3=\phi_4$ is also a permutation  and $\phi_4^{-1}=\phi_1^{-1} + \phi_2^{-1} + \phi_3^{-1} $. First, we show that $\phi_4$ is a permutation. By definition of $\phi_4$, we get that for all $y\in\F_2^m$, $y_{m+1}\in\F_2$ it holds that
 		$$ \phi_4(y,y_{m+1})= \begin{cases}
 			((\pi_1+\pi_2+\pi_3)(y),1) & \text{if } y_{m+1}=1\\
 			((\sigma_1+\sigma_2+\sigma_3)(y),0) & \text{if } y_{m+1}=0
 		\end{cases}.$$
 		Since $\pi_4=\pi_1+\pi_2+\pi_3$ and $\sigma_4=\sigma_1+\sigma_2+\sigma_3$  are permutations, we get that $\phi_4$ is a permutation as well. Now, we show that $\phi_4^{-1}=\phi_1^{-1} + \phi_2^{-1} + \phi_3^{-1}$. By Lemma~\ref{eq: new permutation inverse}, we have that for all $y\in\F_2^m$, $y_{m+1}\in\F_2$ it holds that
 		\begin{equation*}
 			\begin{split}
 				\phi^{-1}_4(y,y_{m+1})&= \begin{cases}
 					((\pi_1+\pi_2+\pi_3)^{-1}(y),1) & \text{if } y_{m+1}=1\\
 					((\sigma_1+\sigma_2+\sigma_3)^{-1}(y),0) & \text{if } y_{m+1}=0
 				\end{cases} \\	
 				&= \begin{cases}
 					(\pi_1^{-1}(y)+\pi_2^{-1}(y)+\pi_3^{-1}(y),1) & \text{if } y_{m+1}=1\\
 					(\sigma_1^{-1}(y)+\sigma_2^{-1}(y)+\sigma_3^{-1}(y),0) & \text{if } y_{m+1}=0
 				\end{cases} \\	
 				&=(\phi_1^{-1} + \phi_2^{-1} + \phi_3^{-1} )(y,y_{m+1}),
 			\end{split}
 		\end{equation*} 
 		from what follows that permutations $\phi_1,\phi_2,\phi_3$ have the $\left(\mathcal{A}_{m+1}\right)$ property.\qed
 		\end{proof}
	\subsection{Concatenating Maiorana-McFarland bent functions satisfying the dual bent condition}\label{subsec: 2.1 Concatenating MM generalized}
	Theorem~\ref{th:cond_on_h} specifies the dual bent condition for Maiorana-McFarland bent functions $f_i(x,y)=Tr(x\pi_i(y))+h_i(y)$ on $\F_{2^m}\times\F_{2^m}$, where permutations $\pi_i$ have the $(\mathcal{A}_m)$ property and $f_4=f_1+f_2 +f_3$. In this section, we give a generalization of Theorem~\ref{th:cond_on_h}, which is motivated by the following example.
	
		\begin{example}\label{ex:decomposing_h}
		Define the permutations $\pi_i$ on $\F_2^4$ as follows (where the rows correspond to the coordinate functions):
		\begin{equation*}
			\begin{split}
				\pi_1(y)&=\begin{pmatrix}
					y_1 + y_2 + y_1 y_4 + y_2 y_4 + y_3 y_4 \\
					y_1 + y_1 y_2 + y_3 + y_2 y_3 + y_2 y_4 \\
					y_1 y_2 + y_3 + y_1 y_3 + y_2 y_4 + y_3 y_4 \\ 
					y_1 + y_3 + y_1 y_3 + y_2 y_3 + y_4 + y_1 y_4 + y_2 y_4
				\end{pmatrix}, \\
				\pi_2(y)&=\pi_1(y)+\begin{pmatrix} y_2 + y_3 + y_4\\ 1 + y_2 + y_3 + y_4\\ y_1 + y_3\\ y_1 + y_3 \end{pmatrix},\\
				\pi_3(y)&=\pi_1(y)+ \begin{pmatrix} y_1 + y_4\\ y_1 + y_2\\ 1 + y_1 + y_2\\ 1 + y_1 + y_4 \end{pmatrix},
				\pi_4(y)=(\pi_1+\pi_2+\pi_3)(y).
			\end{split}
		\end{equation*}
		The algebraic normal forms of the functions $h_i$ are given as follows:
		\begin{equation*}
			\begin{split}
				h_1(y)&=y_1 y_3 y_4, \quad h_2(y)= y_2 y_3 + y_1 y_4 + y_2 y_4 + y_3 y_4 + y_1 y_3 y_4, \\
				h_3(y)&=y_1 y_3 + y_2 y_3 + y_3 y_4 + y_1 y_3 y_4, \quad h_4(y)= (h_1+h_2+h_3)(y)+s(y),
			\end{split}
		\end{equation*}
		where $s(y)=y_1 + y_2 + y_4$.  
		One can check that the above defined permutations $\pi_1,\pi_2,\pi_3$ of $\F_2^4$ have the $(\mathcal{A}_4)$ property and that $f_1|| f_2|| f_3||f_4\in\mathcal{B}_{10}$ is bent for bent functions $f_{i}(x,y)=x\cdot \pi_i(y)+h_i(y)$, where $x,y\in\F_2^4$. However, the conditions of Theorem~\ref{th:cond_on_h} are not fulfilled, since $f_1+f_2+f_3+f_4=s\neq0$.
	\end{example}
	Now, we provide a more general version of Theorem~\ref{th:cond_on_h}, which covers the previous example.
	\begin{theorem}\label{th: Frobenius extended}
		Let $f_i(x, y)=Tr\left(x \pi_i(y)\right)+h_i(y)$ for $i \in\{1,2,3\}$ and $x, y \in$ $\mathbb{F}_{2^{m}}$ with $n=2m$, where the permutations $\pi_i$ satisfy the condition $\left(\mathcal{A}_m\right)$, and let $s(y)\in\mathcal{B}_{m}$. 
		Define a function $h_4\in\mathcal{B}_m$ as $h_4(y)=h_1(y)+h_2(y)+h_3(y)+s(y)$ and a bent function $f_4\in\mathcal{B}_n$ as $f_4(x,y)=f_1(x,y)+f_2(x,y)+f_3(x,y)+s(y)$. If the functions  $h_1, \ldots, h_4$  satisfy
		\begin{equation}\label{eq: hi condition}
			h_1(\pi_1^{-1}(x))+h_2(\pi_2^{-1}(x))+h_3(\pi_3^{-1}(x))+h_4((\pi_1+\pi_2+\pi_3)^{-1}(x))=1 \text {, }
		\end{equation}
		then $f_1|| f_2|| f_3||f_4\in\mathcal{B}_{n+2}$ is bent.
	\end{theorem}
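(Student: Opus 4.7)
The plan is to apply the dual bent condition of \cite[Theorem III.1]{HPZ2019}, which reduces bentness of $f_1\|f_2\|f_3\|f_4$ to the identity $f_1^*+f_2^*+f_3^*+f_4^*=1$, and to show that under the hypotheses this identity collapses precisely to \eqref{eq: hi condition}.

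First I would observe that $f_4$ is itself a Maiorana-McFarland bent function. Indeed, by construction $f_4(x,y)=Tr(x(\pi_1+\pi_2+\pi_3)(y))+h_1(y)+h_2(y)+h_3(y)+s(y)=Tr(x\pi_4(y))+h_4(y)$, and since $\pi_4=\pi_1+\pi_2+\pi_3$ is a permutation by the $(\mathcal{A}_m)$ property, $f_4\in\mathcal{M}$ and $f_4$ is bent. Thus all four subfunctions $f_1,f_2,f_3,f_4$ are Maiorana-McFarland bent functions on $\F_{2^m}\times\F_{2^m}$.

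Next I would compute the duals. A standard Walsh transform computation for a Maiorana-McFarland function $g(x,y)=Tr(x\pi(y))+h(y)$ gives $g^*(x,y)=Tr(y\pi^{-1}(x))+h(\pi^{-1}(x))$; the inner sum over $x$ collapses to $2^m$ exactly when $\pi(y)=x$, which forces $y=\pi^{-1}(x)$. Applied to $f_1,f_2,f_3,f_4$, summing the four duals yields
\begin{equation*}
\sum_{i=1}^{4} f_i^*(x,y) \;=\; Tr\!\left(y\,\bigl(\pi_1^{-1}+\pi_2^{-1}+\pi_3^{-1}+\pi_4^{-1}\bigr)(x)\right)+\sum_{i=1}^{4} h_i(\pi_i^{-1}(x)).
\end{equation*}
By the second clause of the $(\mathcal{A}_m)$ property, $\pi_4^{-1}=\pi_1^{-1}+\pi_2^{-1}+\pi_3^{-1}$, so the trace term vanishes identically in $y$, and the whole sum reduces to $\sum_{i=1}^{4} h_i(\pi_i^{-1}(x))$. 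Hypothesis \eqref{eq: hi condition} states exactly that this sum equals $1$.

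Combining these observations, $f_1^*+f_2^*+f_3^*+f_4^*=1$, and \cite[Theorem III.1]{HPZ2019} yields that $f=f_1\|f_2\|f_3\|f_4\in\mathcal{B}_{n+2}$ is bent. There is no real obstacle here: the proof is an almost mechanical extension of Theorem~\ref{th:cond_on_h}, the only subtlety being to notice that absorbing the extra term $s(y)$ into $h_4$ keeps $f_4$ in $\mathcal{M}$ with the same permutation $\pi_4=\pi_1+\pi_2+\pi_3$, so the cancellation via the $(\mathcal{A}_m)$ property still kills the trace part and leaves only the condition on the $h_i$'s.
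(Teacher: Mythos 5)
Your proposal is correct and follows essentially the same route as the paper's proof: identify $f_4$ as a Maiorana--McFarland bent function with permutation $\pi_4=\pi_1+\pi_2+\pi_3$ and $h_4$, sum the standard duals, cancel the trace part via the $(\mathcal{A}_m)$ property, and invoke the dual bent condition of \cite[Theorem III.1]{HPZ2019} together with \eqref{eq: hi condition}. No gaps; your brief justification of the dual formula for Maiorana--McFarland functions is a minor addition the paper leaves implicit.
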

	\begin{proof}
		Clearly $f_4(x,y)=(f_1+f_2+f_3+s)(x,y)=Tr\left(x\left(\pi_1+\pi_2+\pi_3\right)(y)\right)+h_4(y)$. Since the permutations $\pi_1,\pi_2,\pi_3$ satisfy the condition $\left(\mathcal{A}_m\right)$, their sum is again a permutation and $f_4$ is a bent Maiorana-McFarland function. Its dual is
		$
		f_4^*(x,y)=Tr(y\left(\pi_1+\pi_2+\pi_3\right)^{-1}(x))+h_4(\left(\pi_1+\pi_2+\pi_3\right)^{-1}(x))
		$.
		Then, the sum of dual bent functions of $f_i$ is given by
		$$
		\begin{aligned}
			(f_1^*+f_2^*+f_3^*+f_4^*)(x,y)
			=&Tr(y (\pi_1^{-1}+\pi_2^{-1}+\pi_3^{-1}+(\pi_1+\pi_2+\pi_3)^{-1})(x)) \\
			+ & h_1(\pi_1^{-1}(x))+ h_2(\pi_2^{-1}(x))  +h_3(\pi_3^{-1}(x)) \\
			+& h_4((\pi_1+\pi_2+\pi_3)^{-1}(x)) \\
			= & h_1(\pi_1^{-1}(x))+h_2(\pi_2^{-1}(x))+h_3(\pi_3^{-1}(x)) \\
			+&h_4((\pi_1+\pi_2+\pi_3)^{-1}(x))\\
			=&1,
		\end{aligned}
		$$
		thus by \cite[Theorem III.1]{HPZ2019} we have that $f_1|| f_2|| f_3||f_4\in \mathcal{B}_{n+2}$ is bent.
		\qed	
	\end{proof} 

	\begin{remark}
		Theorem~\ref{th: Frobenius extended} explains  why the concatenation $f_1|| f_2|| f_3||f_4\in\mathcal{B}_{10}$ of bent functions $f_1,f_2,f_3,f_4$ from Example~\eqref{ex:decomposing_h} is a bent function again. Now, applying Theorem~\ref{th:sharingcommonsubspace}, we show that the concatenation of these functions is outside the completed Maiorana-McFarland class. First, we observe that Maiorana-McFarland bent functions $f_i$ share the unique canonical $\mathcal{M}$-subspace $\F_2^4\times\{0_4\}$.  Consequently, all subspaces $U\subset\F_2^4\times\{0_4\}$ of dimension 3 are common $\mathcal{M}$-subspaces for all functions $f_i$; note that there are exactly ${4 \brack 3}_2=15$ such subspaces. Apart from these subspaces $U\subset\F_2^4\times\{0_4\}$, the functions $f_i$ have additionally the following 8 common $\mathcal{M}$-subspaces, which are not subspaces of $\F_2^4\times\{0_4\}$:
		$$\scalebox{1}{$\left\langle
			\begin{array}{cccccccc}
				1 & 0 & 0 & 0 & 0 & 1 & 0 & 1 \\
				0 & 1 & 1 & 0 & 0 & 0 & 0 & 0 \\
				0 & 0 & 0 & 1 & 0 & 1 & 0 & 1 \\
			\end{array}
			\right\rangle,\left\langle
			\begin{array}{cccccccc}
				1 & 0 & 0 & 0 & 1 & 1 & 1 & 1 \\
				0 & 1 & 0 & 0 & 1 & 1 & 1 & 1 \\
				0 & 0 & 1 & 1 & 0 & 0 & 0 & 0 \\
			\end{array}
			\right\rangle,\left\langle
			\begin{array}{cccccccc}
				1 & 0 & 0 & 1 & 0 & 0 & 0 & 0 \\
				0 & 1 & 0 & 0 & 0 & 1 & 0 & 1 \\
				0 & 0 & 1 & 0 & 0 & 1 & 0 & 1 \\
			\end{array}
			\right\rangle,\left\langle
			\begin{array}{cccccccc}
				1 & 0 & 0 & 1 & 0 & 0 & 0 & 0 \\
				0 & 1 & 0 & 1 & 0 & 1 & 0 & 1 \\
				0 & 0 & 1 & 1 & 0 & 1 & 0 & 1 \\
			\end{array}
			\right\rangle$}$$
		$$\scalebox{1}{$\left\langle
			\begin{array}{cccccccc}
				1 & 0 & 0 & 1 & 0 & 0 & 0 & 0 \\
				0 & 1 & 1 & 0 & 0 & 0 & 0 & 0 \\
				0 & 0 & 0 & 0 & 0 & 1 & 0 & 1 \\
			\end{array}
			\right\rangle,\left\langle
			\begin{array}{cccccccc}
				1 & 0 & 0 & 1 & 1 & 1 & 1 & 1 \\
				0 & 1 & 0 & 1 & 1 & 1 & 1 & 1 \\
				0 & 0 & 1 & 1 & 0 & 0 & 0 & 0 \\
			\end{array}
			\right\rangle,\left\langle
			\begin{array}{cccccccc}
				1 & 1 & 0 & 0 & 0 & 0 & 0 & 0 \\
				0 & 0 & 1 & 0 & 1 & 1 & 1 & 1 \\
				0 & 0 & 0 & 1 & 1 & 1 & 1 & 1 \\
			\end{array}
			\right\rangle,\left\langle
			\begin{array}{cccccccc}
				1 & 1 & 0 & 0 & 0 & 0 & 0 & 0 \\
				0 & 0 & 1 & 1 & 0 & 0 & 0 & 0 \\
				0 & 0 & 0 & 0 & 1 & 1 & 1 & 1 \\
			\end{array}
			\right\rangle$}. $$
		One can check that for every $3$-dimensional subspace  $V$ of $\F_2^8$ such that $D_aD_bf_i=0$, for all $a,b \in V$, where $i=1, \ldots ,4$, the conditions of Theorem~\ref{th:sharingcommonsubspace} are satisfied, and hence, the bent function $f_1|| f_2|| f_3||f_4\in\mathcal{B}_{10}$ is outside $\mathcal{M}^\#$.
	\end{remark}
	
	Now, we provide a recursive construction of Maiorana-McFarland bent  functions $f'_1,f'_2,f'_3,f'_4\in\mathcal{B}_{n+2}$ satisfying the condition $(f'_1)^*+(f'_2)^*+(f'_3)^*+(f'_4)^*=1$ from bent functions $f_1,f_2,f_3,f_4\in\mathcal{B}_{n}$ satisfying the condition $f_1^*+f_2^*+f_3^*+f_4^*=1$ using Theorem~\ref{th: Frobenius extended}.
	\begin{proposition}\label{prop: Extension of An}
		Let $\pi_j$, $\sigma_j$ for $j \in\{1,2,3\}$ be  permutations on $\F_2^m$ which satisfy the condition $\left(\mathcal{A}_m\right)$. Denote by $\pi_4=\pi_1+\pi_2+\pi_3$, $\sigma_4=\sigma_1+\sigma_2+\sigma_3$ and let  Boolean functions $h_i$, $g_i$ on $\F_2^m$, $i \in\{1,2,3,4\}$ satisfy
		\begin{equation*}
			\begin{split}
				h_1(\pi_1^{-1}(y))+h_2(\pi_2^{-1}(y))+h_3(\pi_3^{-1}(y))+h_4(\pi_4^{-1}(y))=1, \\
				g_1(\sigma_1^{-1}(y))+g_2(\sigma_2^{-1}(y))+g_3(\sigma_3^{-1}(y))+g_4(\sigma_4^{-1}(y))=1. 
			\end{split}
		\end{equation*}
		Define four permutations $\phi_i$ on $\F_2^{m+1}$ as in~\eqref{eq: Permutations phi}
		and four Boolean functions $h_i'$ on $\F_2^{m+1}$ as follows
		\begin{equation*}
			h_i'(y,y_{m+1})=y_{m+1}h_i(y)+(y_{m+1}+1)g_i(y) \mbox{ for } i\in\{1,2,3,4\}.
		\end{equation*} 
		Then, the following hold.
		\begin{itemize}
			\item[1.]  The functions $h'_i$ satisfy
			\begin{equation*}
				\sum\limits_{i=1}^4 	h_i'(\phi_i^{-1}(y,y_{m+1}))=1,
			\end{equation*}
			for all $y\in\F_2^m,y_{m+1}\in\F_2$, where $\phi_4=\phi_1+\phi_2+\phi_3$.
			\item[2.] The Boolean functions 	$f'_i(x',y')=Tr\left(x' \phi_i(y')\right)+h'_i(y')$ for $i \in\{1,2,3,4\}$ and $x'=(x,x_{m+1}), y'=(y,y_{m+1}) \in$ $\mathbb{F}_{2}^{m+1}$ are bent, moreover,  $f_1'|| f_2'|| f_3'||f_4'\in\mathcal{B}_{n+2}$ is bent as well.
		\end{itemize}
	\end{proposition}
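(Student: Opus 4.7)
The plan is to handle the two parts sequentially, with Part 2 building directly on Part 1 via Theorem~\ref{th: Frobenius extended}.

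For Part 1, the key ingredient is Lemma~\ref{eq: new permutation inverse}, which gives the inverse of each $\phi_i$ in piecewise form: $\phi_i^{-1}(y,y_{m+1})=(\pi_i^{-1}(y),1)$ when $y_{m+1}=1$, and $(\sigma_i^{-1}(y),0)$ when $y_{m+1}=0$. By the definition of $h_i'$, evaluating $h_i'$ at $(\pi_i^{-1}(y),1)$ yields $h_i(\pi_i^{-1}(y))$, while evaluating at $(\sigma_i^{-1}(y),0)$ yields $g_i(\sigma_i^{-1}(y))$. Summing over $i\in\{1,2,3,4\}$ and applying the two hypothesis identities on $h_i$ and $g_i$ in the respective cases gives $1$ in both, which is exactly the required dual bent condition for the $h_i'$.

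For Part 2, Proposition~\ref{prop: Am permutations new from old} guarantees that $\phi_1,\phi_2,\phi_3$ satisfy $(\mathcal{A}_{m+1})$, so in particular $\phi_4=\phi_1+\phi_2+\phi_3$ is a permutation of $\F_2^{m+1}$. Each $f_i'(x',y')=Tr(x'\phi_i(y'))+h_i'(y')$ is therefore a Maiorana-McFarland bent function on $\F_2^{2(m+1)}$. To conclude bentness of the concatenation $f_1'\|f_2'\|f_3'\|f_4'$, I would apply Theorem~\ref{th: Frobenius extended} with the auxiliary function $s'(y,y_{m+1}):=h_1'(y,y_{m+1})+h_2'(y,y_{m+1})+h_3'(y,y_{m+1})+h_4'(y,y_{m+1})\in\mathcal{B}_{m+1}$, which is the unique choice making $h_4'=h_1'+h_2'+h_3'+s'$. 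The dual bent condition~\eqref{eq: hi condition} required by that theorem is precisely what Part 1 establishes, so the conclusion follows immediately.

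No genuine obstacle is expected: the argument is essentially a coordinate-wise verification that the piecewise construction of $\phi_i$ and $h_i'$ preserves the dual bent condition. The only subtlety worth flagging is recognising that Theorem~\ref{th: Frobenius extended} tolerates an arbitrary shift $s\in\mathcal{B}_m$, so $h_4$ need not equal $h_1+h_2+h_3$; this flexibility is exactly what permits $h_4'$, assembled from two \emph{independently chosen} functions $h_4$ and $g_4$, to fit the hypotheses of the theorem, and hence what makes the recursion work.
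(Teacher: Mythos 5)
Your proposal is correct and follows essentially the same route as the paper: Part 1 by applying Lemma~\ref{eq: new permutation inverse} to write each $\phi_i^{-1}$ piecewise and splitting the sum into the cases $y_{m+1}=1$ and $y_{m+1}=0$, and Part 2 by invoking Theorem~\ref{th: Frobenius extended} (with the $(\mathcal{A}_{m+1})$ property supplied by Proposition~\ref{prop: Am permutations new from old}). Your explicit identification of the shift $s'=h_1'+h_2'+h_3'+h_4'$ merely spells out what the paper leaves as ``follows immediately,'' so there is nothing to correct.
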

	\begin{proof}
		\textit{1.} Observe that for $j\in\{1,2,3\}$, we have that for all $y\in\F_2^m,y_{m+1}\in\F_2$ holds
		\begin{equation*}
			\begin{split}
				h'_j(\phi_j^{-1}(y,y_{m+1}))&= 
				\begin{cases}
					h'_j(\phi^{-1}_j(y,1)) & \text{if } y_{m+1}=1\\
					h'_j(\phi^{-1}_j(y,0)) & \text{if } y_{m+1}=0
				\end{cases}\\
				&=\begin{cases}
					h'_j(\pi_j^{-1}(y),1) & \text{if } y_{m+1}=1\\
					h'_j(\sigma_j^{-1}(y),0) & \text{if } y_{m+1}=0
				\end{cases} \\
				&=\begin{cases}
					h_j(\pi_j^{-1}(y)) & \text{if } y_{m+1}=1\\
					g_j(\sigma_j^{-1}(y)) & \text{if } y_{m+1}=0
				\end{cases}. \\
			\end{split}
		\end{equation*}
		\noindent	Then, for all $y\in\F_2^m,y_{m+1}\in\F_2$, we consider the sum
		\begin{equation*}
			\begin{split}
				\sum\limits_{i=1}^4 	h_i'(\phi_i^{-1}(y,y_{m+1}))=&\begin{cases}
					\sum\limits_{i=1}^4 	h_i(\pi_i^{-1}(y)) & \text{if } y_{m+1}=1\\
					\sum\limits_{i=1}^4 g_i(\sigma_i^{-1}(y)) & \text{if } y_{m+1}=0
				\end{cases}\\
				=&\begin{cases}
					\sum\limits_{i=1}^3 	h_i(\pi_i^{-1}(y))+h_4((\pi_1+\pi_2+\pi_3)^{-1}(y)) & \text{if } y_{m+1}=1\\
					\sum\limits_{i=1}^3 g_i(\sigma_i^{-1}(y))+ g_4((\sigma_1+\sigma_2+\sigma_3)^{-1}(y)) & \text{if } y_{m+1}=0
				\end{cases}\\
				=&1,
			\end{split}
		\end{equation*}
		since $h_1\left(\pi_1^{-1}(y)\right)+h_2\left(\pi_2^{-1}(y)\right)+h_3\left(\pi_3^{-1}(y)\right)+h_4((\pi_1+\pi_2+\pi_3)^{-1}(y))=1$ and $g_1\left(\sigma_1^{-1}(y)\right)+g_2\left(\sigma_2^{-1}(y)\right)+g_3\left(\sigma_3^{-1}(y)\right)+g_4((\sigma_1+\sigma_2+\sigma_3)^{-1}(y))=1$ hold for all $y\in\F_2^m$.
		
		\noindent\textit{2.} The statement follows immediately from Theorem~\ref{th: Frobenius extended}. \qed
	\end{proof}
	\begin{remark}
		Applying Proposition~\ref{prop: Extension of An} to Example~\ref{ex:decomposing_h}, one gets infinite families of permutations having the $(\mathcal{A}_{m+k})$ property for $k\in\N$, since Proposition~\ref{prop: Extension of An} can be used recursively to define permutations on $\F_2^{m+k}$ that satisfy the $(\mathcal{A}_{m+k})$ property. 	
	\end{remark}
\begin{openproblem}
	Provide sufficient conditions for  permutations $\phi_i$ in Proposition \ref{prop: Extension of An}, more precisely  on the permutations $\pi_i$ and $\sigma_i$ defined on $\F_2^m$, so that the concatenation of four bent functions $f_i(x,y)=x \cdot \phi_i(y) + h_i(y) \in \cM^\#$, where $f_i:\F_2^{m+k} \times  \F_2^{m+k} \rightarrow \F_2$, generates a bent function $f=f_1||f_2||f_3||f_4$ on $\F_2^{2m+2k +2}$ outside $\cM^\#$.
	\end{openproblem}
\begin{openproblem}
	Example~\ref{ex:decomposing_h} demonstrates that it is possible to define four permutations $\pi_i$ of $\F_2^m$ with $i\in\{1,2,3,4\}$, where $\pi_j(y)=\pi_1(y)+L_j(y)$ for $j=2,3,4$, and $L_j\colon\F_2^m\to\F_2^m$ are linear mappings, such that for four suitably chosen Boolean functions $h_i\in\mathcal{B}_m$, the concatenation $f=f_1||f_2||f_3||f_4\in\mathcal{B}_{2m+2}$ of four Maiorana-McFarland bent functions $f_i(x,y)=x\cdot\pi_i(y)+h_i(y)$ on $\F_2^m\times\F_2^m$ is bent. We suggest to find constructions of such permutations $\pi_i$ and Boolean functions $h_i$ explicitly, and, additionally, show that the resulting bent function $f\in\mathcal{B}_{2m+2}$ is outside $\mathcal{M}^\#$. 
\end{openproblem}
In the sequel, we will demonstrate that these sufficient conditions that the concatenation $f=f_1||f_2||f_3||f_4$ is outside $\cM^\#$ can be specified when permutations  $\pi_i$ on $\F_2^m$ have the  $(\mathcal{A}_{m})$ property, thus without using Proposition  \ref{prop: Extension of An} recursively.


			\section{A sufficient condition for satisfying the outside $\cM^\#$ property}\label{sec: 3}
		
			
			Since in the bent 4-concatenation we consider bent functions $f_i\in\mathcal{B}_n$ in $\mathcal{M}^\#$, it is essential to specify the conditions on these functions such that the resulting bent function $f=f_1||f_2||f_3||f_4\in\mathcal{B}_{n+2}$ is outside $\mathcal{M}^\#$. Otherwise, one just gets a complicated construction method of bent functions in $\mathcal{M}^\#$. For this purpose, we will use the following description of $\mathcal{M}$-subspaces of $f=f_1||f_2||f_3||f_4\in\mathcal{B}_{n+2}$, which together with several other results for the 4-bent concatenation of bent functions in $\cM^\#$ class are available in~\cite{PPKZ2023}.
			
			\begin{proposition}\cite{PPKZ2023} \label{prop:commonsubspace} Let $f_1,f_2,f_3,f_4\in\mathcal{B}_n$ be four Boolean functions (not necessarily bent), such that $f=f_1||f_2||f_3||f_4 \in\B_{n+2}$ is a bent function in $\cM^{\#}$. Let $W\subset\F_2^{n+2}$ be an $\mathcal{M}$-subspace of $f$ of dimension $(\frac{n}{2}+1)$. Then, there exists an $(\frac{n}{2}-1)$-dimensional subspace $V$ of $\F_2^n$ such that $V \times \{(0,0)\}$ is a subspace of $W$, and such that for all $i=1, \ldots ,4$ the equality $D_aD_bf_i=0$ holds for all $a,b \in V$.
			\end{proposition}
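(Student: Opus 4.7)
My plan is to reduce the statement to a clean dimension count combined with the observation that the vanishing of a second-order derivative of $f$ on a direction pair of the form $((a,0,0),(b,0,0))$ is equivalent, coset by coset, to the vanishing of $D_aD_bf_i$ for each $i\in\{1,2,3,4\}$. First I would recall from the representation in~\eqref{eq:ANF_4conc} that the four restrictions of $f$ to the cosets of $\F_2^n\times\{(0,0)\}$ are exactly $f_1, f_2, f_3, f_4$, i.e., $f_1(z)=f(z,0,0)$, $f_2(z)=f(z,0,1)$, $f_3(z)=f(z,1,0)$ and $f_4(z)=f(z,1,1)$.

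The key step is a dimension-count: consider the intersection $W_0 = W\cap(\F_2^n\times\{(0,0)\})$. Since $\dim W = \tfrac{n}{2}+1$ and $\dim(\F_2^n\times\{(0,0)\})=n$ inside the ambient $\F_2^{n+2}$, the standard inequality gives
\begin{equation*}
\dim W_0\ \ge\ \dim W+n-(n+2)\ =\ \tfrac{n}{2}-1.
\end{equation*}
Hence one may choose a $(\tfrac{n}{2}-1)$-dimensional subspace of $W_0$, and identify it in the natural way with a subspace $V\subset\F_2^n$ satisfying $V\times\{(0,0)\}\subset W$ by construction. This already yields the first conclusion of the proposition.

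For the second conclusion, I would fix $a,b\in V$ and observe that, because $(a,0,0),(b,0,0)\in W$ and $W$ is an $\mathcal{M}$-subspace of $f$, the second-order derivative $D_{(a,0,0)}D_{(b,0,0)}f$ vanishes identically on $\F_2^{n+2}$. Evaluating this identity on an arbitrary point $(z,c,d)\in\F_2^n\times\F_2^2$ and using that translations by $(a,0,0)$ and $(b,0,0)$ preserve the $(c,d)$-coset, the expression reduces to $D_aD_bf_i(z)$, where $i\in\{1,2,3,4\}$ is the index associated to $(c,d)$ via the correspondence above. Running over all four choices of $(c,d)$ yields $D_aD_bf_i=0$ on all of $\F_2^n$ for every $i$, which completes the proof.

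The proof itself is therefore very short; the only real obstacle is correctly setting up the identification between $W_0$ and a subspace of $\F_2^n$, and then matching the four cosets of $\F_2^n\times\{(0,0)\}$ in $\F_2^{n+2}$ with the four subfunctions $f_i$ in the right order, as prescribed by the canonical concatenation formula~\eqref{eq:ANF_4conc}. Once this bookkeeping is done, neither the bentness of $f$ nor any hypothesis beyond ``$W$ is an $\mathcal{M}$-subspace of $f$'' is needed for the second-order derivatives of the $f_i$ to vanish on $V$.
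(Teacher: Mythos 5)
Your proof is correct. The paper does not reprove this proposition (it is imported from~\cite{PPKZ2023}), and your argument --- the dimension count $\dim\bigl(W\cap(\F_2^n\times\{(0,0)\})\bigr)\ge \dim W + n-(n+2)=\tfrac n2-1$, followed by restricting the identity $D_{(a,0,0)}D_{(b,0,0)}f=0$ to the four cosets of $\F_2^n\times\{(0,0)\}$, on which $f$ restricts to $f_1,\dots,f_4$ --- is exactly the natural argument underlying the cited result, and you are right that bentness and membership in $\cM^{\#}$ serve only to guarantee the existence of such a $W$.
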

			
			For the main result of this section, we will also need to define the~$(P_1)$ property, which was recently introduced in~\cite{PPKZ2023} for specifying Maiorana-McFarland bent functions with the unique canonical $\mathcal{M}$-subspace.
		\begin{definition}	 A mapping $\pi\colon\F_2^m\to \F_2^m$ has the property $(P_1)$ if for all linearly independent  $v,w\in\F_2^m$ we have that $D_vD_w\pi(y)\neq0_m$ for all $y\in\F_2^m$. 
			\end{definition}
		More precisely, by Theorem 3.1 in \cite{PPKZ2023}, if a permutation $\pi$ on $\F_2^m$ satisfies the property $(P_1)$ then the bent function 
		$f(x,y)=x \cdot \pi(y) + h(y)$ (where $h$ is arbitrary) has the unique canonical $\mathcal{M}$-subspace of dimension $m$, namely $U=\F_2^m \times \{0_m\}$. 
			
			\begin{theorem}\label{th: Outside MM}
                Let  $n=2m$ for $m >3$ and define three bent functions $f_i(x,y)=x \cdot \pi_i(y)  + h_i(y)$, with $x, y \in \F_2^m$, for  $i=1, \ldots,3$, where
                \begin{itemize}
                	\item[1.] Permutations $\pi_1,\pi_2,\pi_3$ have the property $\left(\mathcal{A}_m\right)$,
                	\item[2.] Permutations $\pi_1,\pi_2,\pi_3$ and the mapping $\pi_1+\pi_2$ have the property~$(P_1)$,
                	\item[3.] The components of $\pi_1+\pi_2$ do not admit linear structures.
                \end{itemize} 
				Define $f=f_1||f_2||f_3||f_4$ where   $f_4(x,y)=f_1(x,y)+f_2(x,y)+f_3(x,y)   + s(y)$ (consequently $h_4(y)=h_1(y)+h_2(y)+h_3(y) + s(y)$) using suitable $h_i\in\mathcal{B}_m$ and $s\in\mathcal{B}_m$ so that 
				the dual bent condition in~\eqref{eq: hi condition} is satisfied.
				Then, the  function $f\in\mathcal{B}_{n+2}$ is bent and  outside $\cM^\#$. In particular, the same conclusion is valid  when $s(y)=0$.
			\end{theorem}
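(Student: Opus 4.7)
The plan is to apply Theorem~\ref{th:sharingcommonsubspace}: bentness of $f$ is immediate from Theorem~\ref{th: Frobenius extended} using the assumed dual bent condition~\eqref{eq: hi condition}, so it remains to verify the hypotheses on common $\mathcal{M}$-subspaces and on the first-order derivatives.  Property $(P_1)$ for $\pi_1,\pi_2,\pi_3$, together with Theorem~3.1 of~\cite{PPKZ2023}, shows that each of $f_1, f_2, f_3$ has the canonical subspace $U=\F_2^m\times\{0_m\}$ as its unique $m$-dimensional $\mathcal{M}$-subspace; since $f_4$ is Maiorana--McFarland with permutation $\pi_4=\pi_1+\pi_2+\pi_3$, $U$ is an $\mathcal{M}$-subspace of $f_4$ as well, so $U$ is the unique common $m$-dimensional $\mathcal{M}$-subspace of $f_1,\ldots,f_4$.

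The second step is to show that any $(m-1)$-dimensional common $\mathcal{M}$-subspace $V$ of $f_1,\ldots,f_4$ is contained in $U$.  Expanding $D_aD_bf_i=0$ for $a=(a_x,a_y),b=(b_x,b_y)\in V$ gives $x\cdot D_{a_y}D_{b_y}\pi_i(y)=0$ as the pure $x$-linear term, so $D_{a_y}D_{b_y}\pi_i(y)=0_m$ for every $y$; property $(P_1)$ for $\pi_1$ then forces $a_y,b_y$ to be linearly dependent over $\F_2$.  Consequently the $y$-projection of $V$ lies in $\{0_m,c\}$ for some $c$, and if $c\neq 0_m$ then $V=V_0\oplus\langle(u^\ast,c)\rangle$ with $V_0\subset U$ of dimension $m-2$; the remaining cross-term of $D_aD_bf_i=0$ then forces $b_x\cdot D_c\pi_i(y)=0$ for every $b_x$ in the $x$-projection $V_0'$ of $V_0$ and every $i$.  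Adding the $i=1$ and $i=2$ equations, $c$ would be a linear structure with value $0$ of every component $b_x\cdot(\pi_1+\pi_2)$, $b_x\in V_0'$, contradicting Condition~3 since $V_0'\neq\{0_m\}$ by $m>3$.  Hence $V\subset U$.

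Finally, the three derivative conditions of Theorem~\ref{th:sharingcommonsubspace} must be checked for $u=(u_x,0)\in V\subset U$ and $v=(\alpha,\beta)\in\F_2^n$, where each alternative simplifies to the $y$-function $u_x\cdot\bigl(\pi_i(y)+\pi_j(y+\beta)\bigr)$.  The key algebraic observation is that, thanks to $\pi_3+\pi_4=\pi_1+\pi_2$, $\pi_2+\pi_4=\pi_1+\pi_3$, and $\pi_1+\pi_4=\pi_2+\pi_3$, in each of conditions~2) and~3) the sum of the two alternatives equals $u_x\cdot D_\beta(\pi_1+\pi_2)(y)$, which by Condition~3 cannot vanish for $u_x\neq 0_m$, $\beta\neq 0_m$; the $\beta=0_m$ subcase reduces to $u_x\cdot(\pi_1+\pi_3)\not\equiv 0$ (respectively, $u_x\cdot(\pi_2+\pi_3)\not\equiv 0$), for which I would invoke the $(\mathcal{A}_m)$ structure of $\pi_1,\pi_2,\pi_3$ to conclude that the image of the relevant sum spans enough of $\F_2^m$ to bound the bad subspace by dimension $m-2$.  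Condition~1) is the main obstacle: its two-alternative sum is $u_x\cdot D_\beta(\pi_1+\pi_3)(y)$, which is not directly controlled by Condition~3, so I would combine the vanishing of the individual alternative $u_x\cdot(\pi_1(y)+\pi_2(y+\beta))\equiv 0$ with $(P_1)$ for $\pi_1, \pi_2$ and for $\pi_1+\pi_2$ together with the no-linear-structure property to force the resulting shift relation on $u_x\cdot\pi_1$ and $u_x\cdot\pi_2$ to cut out a subspace of dimension at most $m-2$.  Once the three bad subspaces of $u_x$ are all bounded by $m-2$, the $(m-1)$-dimensional $V_x$ contains a valid $u^{(k)}$ for each $k$ (using $m>3$), and Theorem~\ref{th:sharingcommonsubspace} delivers $f\notin\cM^\#$.
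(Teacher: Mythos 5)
The decisive gap is in your third step. Up to that point your reduction is sound: bentness follows from Theorem~\ref{th: Frobenius extended}, the uniqueness of the canonical $\mathcal{M}$-subspace $U=\F_2^m\times\{0_m\}$ follows from $(P_1)$, and your argument that every common $(m-1)$-dimensional $\mathcal{M}$-subspace $V$ of $f_1,\ldots,f_4$ lies inside $U$ (the pure $x$-term forces $D_{a_y}D_{b_y}\pi_1\equiv 0_m$, and the mixed term $a_x\cdot D_c\pi_i(y)$ summed over $i=1,2$ contradicts condition~3) is correct and clean. But the actual content of the proof is the verification of conditions 1)--3) of Theorem~\ref{th:sharingcommonsubspace}, and there you only close conditions 2) and 3) for $\beta\neq 0_m$, via the identity $\pi_3+\pi_4=\pi_1+\pi_2$ and the no-linear-structure hypothesis. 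Condition 1) and the $\beta=0_m$ subcases hinge on the nondegeneracy of $u_x\cdot(\pi_1+\pi_3)$ and $u_x\cdot(\pi_2+\pi_3)$, about which the hypotheses of Theorem~\ref{th: Outside MM} say nothing: $(\mathcal{A}_m)$ together with $(P_1)$ for $\pi_1,\pi_2,\pi_3,\pi_1+\pi_2$ does not exclude, for instance, $\pi_3=\pi_1+c$ for a constant $c\neq 0_m$ (this only forces $D_c\pi_1^{-1}=D_c\pi_2^{-1}$). In that situation, taking $V=\{(u_x,0_m):u_x\cdot c=0\}\subset U$ (a legitimate common $\mathcal{M}$-subspace) and any $v=(\alpha,0_m)$, both alternatives of condition 2) reduce to the constant $u_x\cdot c=0$ for every $u^{(2)}\in V$, so the hypotheses of Theorem~\ref{th:sharingcommonsubspace} fail and your strategy can conclude nothing (failure of a sufficient criterion does not place $f$ in $\cM^\#$, but it does break the proof). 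Your proposed fixes (``the image of the relevant sum spans enough of $\F_2^m$'', ``force the shift relation to cut out a subspace of dimension at most $m-2$'') are exactly the statements that would need proof and that the stated hypotheses do not supply; so this is not a routine gap-filling exercise but a structural obstacle of the chosen route.

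For comparison, the paper does not invoke Theorem~\ref{th:sharingcommonsubspace} at all. It uses Proposition~\ref{prop:commonsubspace} to say that any $(m+1)$-dimensional $\mathcal{M}$-subspace $W$ of $f$ contains $V\times\{(0,0)\}$ for a common $(m-1)$-dimensional subspace $V$, and then analyzes the second-order derivative of the concatenation $f$ itself: the coefficient of $y_2$ contains $x\cdot D_{a_2}D_{b_2}(\pi_1+\pi_2)(y)$, which cannot be cancelled because $f_1+f_2+f_3+f_4=s(y)$ does not depend on $x$; after reducing to $W=\langle U\times(0,0),v\rangle$ the surviving term is $v_1\cdot D_{v_2}(\pi_1+\pi_2)(y)$, killed off by the no-linear-structure assumption. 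Hence only the two hypotheses on $\pi_1+\pi_2$ are ever used, and $\pi_1+\pi_3$, $\pi_2+\pi_3$ never enter. To complete your argument you would have to either prove the missing nondegeneracy properties of $\pi_1+\pi_3$ and $\pi_2+\pi_3$ from the given assumptions (or add them as hypotheses), or switch to the paper's direct analysis of $D_aD_bf$ on a putative $\mathcal{M}$-subspace of $f$.
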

			\begin{proof}  The bentness of $f$ follows from Theorem \ref{th: Frobenius extended}, since we assume that the dual bent condition in  \eqref{eq: hi condition} is satisfied. To simplify the notation, we use the variable $z \in \F_2^n$ to replace $(x,y) \in \F_2^m \times \F_2^m$. 
				Denoting   $a=(a',a^{(1)},a^{(2)})$ and $b=(b',b^{(1)},b^{(2)})$,  where  $a',b' \in \F_2^n$   and $a^{(i)},b^{(i)} \in \F_2$, the second-order derivative of $f$ is given by $D_aD_bf(z,y_1,y_2)=$
				\begin{equation}\label{eq:2ndderiv_conc correct}
					\begin{split}
						&=D_{a'}D_{b'}f_1(z)+ y_1D_{a'}D_{b'}f_{13}(z)+ y_2 D_{a'}D_{b'}f_{12}(z)+  y_1y_2D_{a'}D_{b'}f_{1234}(z) \\
						& + a^{(1)}D_{b'}f_{13}(z +  a')+  b^{(1)} D_{a'}f_{13}(z +  b') +  a^{(2)}D_{b'}f_{12}(yz +  a')   \\
						& +  b^{(2)}D_{a'}f_{12}(z +  b') +  (a^{(1)}y_2 +  a^{(2)}y_1 +  a^{(1)}a^{(2)})D_{b'}f_{1234}(z +  a')   \\ & +(b^{(1)}y_2 +  b^{(2)}y_1 +  b^{(1)}b^{(2)})\times D_{a'}f_{1234}(z +  b')  \\ & +  (a^{(1)}b^{(2)} +  b^{(1)}a^{(2)})f_{1234}  (z +  a' +  b'), 
					\end{split}
				\end{equation}
				where $f_{i_1\ldots i_k}:=f_{i_1} +  \cdots  +  f_{i_k}$.	
					Since $D_uD_v  \pi_i(y) \neq 0_m$ for any nonzero $u \neq v \in \F_2^m$ (as $\pi_i$ satisfies the property~$(P_1)$), the functions $f_i$ share the unique canonical $\mathcal{M}$-subspace $U=\F_2^m \times \{0_m\}$, see  Theorem 3.1 in  \cite{PPKZ2023}.
					For convenience, we denote $a'=(a_1,a_2)$ and  $b'=(b_1,b_2)$, where $a_i,b_i \in \F_2^m$. Since, by the assumption,  $D_{a_2}D_{b_2} ( \pi_1(y) + \pi_2(y)) \neq 0_m$ for any $a_2,b_2 \in \F_2^m$ ($a_2,b_2\ne0$ and distinct),
                      the term $y_2D_{a'}D_{b'}f_{12}(x,y)$
					in \eqref{eq:2ndderiv_conc correct}  cannot be canceled unless $a_2=0_m$ or $b_2=0_m$ or $a_2=b_2$, which  is due to the fact that (same can be deduced for $D_{(a_1, a_2)}D_{(b_1, b_2)}f_{13}(x,y)$)
					\begin{equation}\label{eq:2ndorderf12}
						\begin{split} 
							D_{(a_1, a_2)}D_{(b_1, b_2)}f_{12}(x,y)
							=&x\cdot\left( D_{a_2}D_{b_2}(\pi_1(y)+\pi_2(y)) \right)\\
							+&  a_1\cdot D_{b_2}(\pi_1+\pi_2)(y+ a_2)\\
							+& b_1 \cdot D_{a_2}(\pi_1+\pi_2)(y+ b_2) + D_{a_2} D_{b_2}  h_{12}(y).
						\end{split} 
					\end{equation}
				Notice that the other terms in Equation~\eqref{eq:2ndderiv_conc correct} that possibly contain the variable $y_2$, such as  $a^{(1)}y_2 D_{b'}f_{1234}(z +  a')$ and   $b^{(1)}y_2 D_{a'}f_{1234}(z +  b')$, cannot cancel the term  $y_2D_{a'}D_{b'}f_{12}(x,y)$ since $f_{1234}(x,y)=s(y)$ and its derivatives never depend on $x$.
					Thus, for any $a=(a_1,a_2,a^{(1)},a^{(2)})$ and $b=(b_1,b_2,b^{(1)},b^{(2)})$ in some $(m+1)$-dimensional subspace $W$ of $\F_2^{2m+2}$, we necessarily have that either $a_2=0_m$ or $b_2=0_m$, alternatively $a_2=b_2$. 
					
					Since the functions $f_i$ share the unique canonical $\mathcal{M}$-subspace $U=\F_2^m \times \{0_m\}$, any other subspace $V$ of $\F_2^m \times \F_2^m$ for which $D_{a'}D_{b'}f_i(x,y)=0$ for all $a',b' \in V$ must have dimension less than $m$. By Proposition~\ref{prop:commonsubspace}, if $f$ defined on $\F_2^{2m+2}$ belongs to $\cM^\#$ then for any $\mathcal{M}$-subspace $W$ of $f$ of dimension $m+1$ there must exist $V \subset \F_2^{2m}$ of dimension 
					$m-1$ such  that $D_aD_b f_i =0$ for all $i=1,\ldots,4$ and any $a,b \in V$. Furthermore, by Proposition~\ref{prop:commonsubspace}, we have that $V \times (0,0)$ is a subspace of $W$. There are only two possibilities for $V$, i.e., either $V \subset U = \F_2^m \times \{0_m\}$ or $V \not \subset U$.  
					
					We first consider the case that $V \subset U = \F_2^m \times \{0_m\}$, where $\dim(V)=m-1$. Then, $V \times (0,0) \subset W$ and to extend this subspace to $W$,  we need to adjoin two elements of $\F_2^{2m+2}$, say $u=(u_1,u_2,u^{(1)},u^{(2)}),v=(v_1,v_2,v^{(1)},v^{(2)}) \in \F_2^m \times \F_2^m \times \F_2 \times \F_2$,  and  we further denote  $u'=(u_1,u_2)$, $v'=(v_1,v_2)$. Then, we cannot have the case that $u_2=v_2=0_m$ since this would imply that $f_{12}$ on $\F_2^n$ has an $\mathcal{M}$-subspace of dimension $n/2+1$ which is impossible 
					(see for instance \cite[Proposition 3.1] {Homogeneous2022}).
					 On the other hand, if $u_2 \neq v_2 \neq 0_m$ then again $y_2D_{u'}D_{v'}f_{12}(x,y)$ cannot be canceled in \eqref{eq:2ndderiv_conc correct} due to  the term $y_2 x\cdot\left( D_{u_2}D_{v_2}(\pi_1(y)+\pi_2(y)) \right) $. 
					
					W.l.o.g. we now  assume that 
					$u_2=0_m$ and $v_2 \neq 0_m$, which implies that $U \times (0,0) \subset W$. Hence, $W=\langle U \times (0,0), v \rangle $, where $v_2 \neq 0_m$ and $v=(v_1,v_2, v^{(1)},v^{(2)} )$ with $v^{(1)},v^{(2)} \in \F_2$. Notice that the case $u_2=v_2$, which also might lead to $D_{u'}D_{v'}f_{12}(x,y)=0$, reduces to this case since $u_2+v_2=0_m$ and then $u'  + v' \in U$. 
					Now, we note that in $W=\langle U \times (0,0), v \rangle $ there must exist an element $z=(z',0,0)$
					 (where $z'=(z_1,z_2) \in \F_2^m \times \F_2^m$) such that $z_1=v_1$ and consequently $z' + 
					v'=(0_m,v_2)$. Considering \eqref{eq:2ndorderf12}, and replacing $a' \rightarrow z'=(v_1, 0_m)$ and  $b' \rightarrow (0_m,v_2)$, we have that only the term $v_1\cdot D_{v_2}(\pi_1+\pi_2)(y)$ remains in  \eqref{eq:2ndorderf12}, which cannot be zero due to our assumption that the  components of $(\pi_1+\pi_2)(y)$ do not admit linear structures.
					
					The second case arises when $V \not \subset U$, where $\dim(V)=m-1$. Hence, $V$ contains at least one element $ a' =(a_1,a_2) \not \in U$, so that $a_2 \neq 0_m$. If $V$ contains one more element not in  $U$, say  $ b'$, then $D_{a'}D_{b'}f_{12}(x,y)\neq 0$ (again the term $x\cdot\left( D_{a_2}D_{b_2}(\pi_1(y)+\pi_2(y)) \right)$ in  \eqref{eq:2ndorderf12} survives since $a_2,b_2 \neq 0_m$) and consequently $D_aD_bf(x,y,y_1,y_2)\neq 0$. 
					If $V$ does not contain one more element which is not in  $U$, then it can be extended  to  $U$ (by replacing $a'$ with  some $(u_1,0_m)$) and the above arguments apply.\qed
				\end{proof}
			Notice that we consider permutations $\pi$  on $\F_2^m$ for $m>3$ since otherwise $\pi$ are at most quadratic and, hence, their components admit linear structures~\cite{CharKyure}.
				\begin{example}\label{ex: bent from monomials outside MM}
					Let $m = 4$ and the multiplicative group of $\F_{2^{4}}$ be given by $\F_{2^{4}}^*=\langle a \rangle$, where
					the primitive element $a$ satisfies $a^4 + a + 1=0$. Let $d=14$, which satisfies $d^2 \equiv 1 \mod 15$. Define $\alpha_1=a, \alpha_2=a^2,\alpha_3=a^4$ and $\alpha_4=\alpha_1+\alpha_2+\alpha_3=a^8$. It is easy  to verify  that for  $i=1, \ldots,3$, the  permutations $\pi_i$ satisfy the three conditions in Theorem \ref{th: Outside MM}.
					Define the following four Boolean functions $$h_1(y)=0, \; h_2(y)=Tr(y),\; h_3(y)=Tr(a y), \; h_4(y)=Tr(a^{13}y) + 1,$$ as well as four bent Maiorana-McFarland bent functions $$f_i(x,y)=Tr(x\pi_i(y))+h_i(y)\quad \mbox{for } i=1,2,3,4\quad \mbox{where } x,y\in\F_{2^4}.$$  Note that $h_1(y)+h_2(y)+h_3(y)+h_4(y)= s(y)= Tr(a^{11}y) + 1$, and hence, $f_4=f_1+f_2+f_3+ s $. Since the functions $h_i$  satisfy the condition~\eqref{eq: hi condition} of Theorem~\ref{th: Frobenius extended}, we have that $f=f_1|| f_2|| f_3||f_4\in\mathcal{B}_{10}$. By Theorem~\ref{th: Outside MM}, the function $f$ is bent and outside $\mathcal{M}^\#$.
				\end{example}

				\section{Explicit construction methods of bent functions outside $\mathcal{M}^\#$ using permutation monomials}\label{sec: 4}
				Monomial permutations $\pi_i$ of $\F_{2^m}$ satisfying the $(\mathcal{A}_m)$ property were specified in~\cite{Mesnager2015}. In this section, we show that for these permutations, it is always possible to find suitable functions $h_i$, such that the concatenation of bent functions $f_i(x,y)=Tr(x\pi_i(y))+h_i(y)$ is bent and outside $\mathcal{M}^\#$.
				\begin{theorem}\cite{Mesnager2015} \label{th:permutA_m2nd class}
					Let $m \geq 3$ be an integer and $d^2 \equiv 1 \mod{2^m-1}$. Let $\pi_i$ be three permutations of $\F_{2^m}$ defined by $\pi_i(y)=\alpha_i y^d$, for $i=1,2,3$, where $\alpha_i \in \F_{2^m}^*$ are pairwise distinct  elements such that $\alpha_i^{d+1}=1$ and $\alpha_4^{d+1}=1$ where $\alpha_4=\alpha_1+\alpha_2 + \alpha_3$. Then, the permutations $\pi_i$ of $\F_{2^m}$ satisfy the property ($\mathcal{A}_m$) and furthermore $\pi_i$ are involutions (i.e., $\pi_i^{-1}=\pi_i$)
					 as well as $\pi_4=\pi_1+\pi_2+\pi_3$.
				\end{theorem}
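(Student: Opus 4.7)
The proof is essentially a direct verification given that the hypotheses $d^2 \equiv 1 \pmod{2^m-1}$ and $\alpha_i^{d+1}=1$ (for $i=1,2,3,4$) are tailor-made for the desired conclusions. My plan is to organize it into four short checks, in the order in which the later steps depend on the earlier ones.

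First, I would verify that each $\pi_i$, including $\pi_4$, is a permutation of $\F_{2^m}$. Since $d\cdot d \equiv 1 \pmod{2^m-1}$, the integer $d$ is invertible modulo $2^m-1$, so $\gcd(d,2^m-1)=1$ and the monomial $y\mapsto y^d$ permutes $\F_{2^m}$. Because $\alpha_i^{d+1}=1$ forces $\alpha_i \neq 0$, multiplication by $\alpha_i$ is also bijective, so $\pi_i$ is a permutation; the same argument, using $\alpha_4^{d+1}=1 \Rightarrow \alpha_4 \neq 0$, handles $\pi_4$.

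Second, I would prove the involution property $\pi_i^{-1}=\pi_i$ for every $i\in\{1,2,3,4\}$ by a one-line computation:
\begin{equation*}
\pi_i(\pi_i(y)) \;=\; \alpha_i\,(\alpha_i y^d)^d \;=\; \alpha_i^{d+1}\, y^{d^2} \;=\; y,
\end{equation*}
where $\alpha_i^{d+1}=1$ and $y^{d^2}=y$ (the latter holds on $\F_{2^m}^\ast$ by $d^2\equiv 1\pmod{2^m-1}$, and trivially at $y=0$).

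Third, I would observe that because each $\pi_i$ is a linear multiple of the same monomial $y^d$, the identity $\pi_4(y)=\alpha_4 y^d = (\alpha_1+\alpha_2+\alpha_3)y^d = \pi_1(y)+\pi_2(y)+\pi_3(y)$ is immediate from the distributivity of the additive group structure of $\F_{2^m}$.

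Finally, combining steps two and three gives the $(\mathcal{A}_m)$ condition for free: $\pi_4=\pi_1+\pi_2+\pi_3$ is a permutation by step one, and
\begin{equation*}
\pi_1^{-1}+\pi_2^{-1}+\pi_3^{-1} \;=\; \pi_1+\pi_2+\pi_3 \;=\; \pi_4 \;=\; \pi_4^{-1},
\end{equation*}
using the involution property on each summand and on $\pi_4$. There is no real obstacle in this proof; the only subtle point worth flagging is that the hypothesis $\alpha_4^{d+1}=1$ is used only for the involution property of $\pi_4$ (which in turn is what makes $\pi_4^{-1}$ agree with the sum of the $\pi_i^{-1}$), whereas the permutation property of $\pi_4$ alone would require just $\alpha_4 \neq 0$.
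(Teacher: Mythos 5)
Your verification is correct and complete: the involution check $\pi_i(\pi_i(y))=\alpha_i^{d+1}y^{d^2}=y$, the observation that $\alpha_4^{d+1}=1$ forces $\alpha_4\neq 0$ so $\pi_4=\pi_1+\pi_2+\pi_3$ is a permutation, and the conclusion $\pi_4^{-1}=\pi_4=\pi_1^{-1}+\pi_2^{-1}+\pi_3^{-1}$ together give the $(\mathcal{A}_m)$ property exactly as required. Note that the paper itself offers no proof of this statement (it is quoted from Mesnager--Cohen--Madore), and your argument is the standard one underlying that citation, so there is nothing to compare beyond saying it matches the expected route.
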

				\subsection{Concatenating Maiorana-McFarland bent functions stemming from the permutation monomials}
				In the following statement, we show how one can specify the functions $h_i$ for permutations $\pi_i$ in Theorem~\ref{th:permutA_m2nd class}, so that the dual bent condition for the corresponding Maiorana-McFarland bent functions $f_i(x,y)=T(x\pi_i(y))+h_i(y)$ is fulfilled.
				\begin{proposition}\label{th: concatenating monomials}
					Let $m \ge 3$ and $\pi_i(y)=\alpha_iy^d$ for $i=1,2,3,4$ be involutions of $\F_{2^m}$ defined as in Theorem~\ref{th:permutA_m2nd class}. Let also $\sigma\colon\{\alpha_1,\alpha_2,\alpha_3,\alpha_4\}\to \{\alpha_1,\alpha_2,\alpha_3,\alpha_4\}$ be a permutation. Define Boolean functions $h_i\in\mathcal{B}_{m}$ for $i=1,2,3,4$  as follows
					\begin{equation}
						h_i(y)=Tr(\beta_i y^k)\quad\mbox{for }i=1,2,3\quad\mbox{and } h_4(y)=Tr(\beta_4 y^k) + 1,
					\end{equation}
					where $k\in\N$ and the elements $\beta_i\in\F_{2^m}^*$ are given by 
					\begin{equation}
						\beta_1=\frac{\sigma(\alpha_2)}{\alpha_1^k}, \beta_2=\frac{\sigma(\alpha_3)}{\alpha_2^k}, \beta_3=\frac{\sigma(\alpha_4)}{\alpha_3^k}, \beta_4=\frac{\sigma(\alpha_1)}{\alpha_4^k}.
					\end{equation}
					Then, the Maiorana-McFarland bent functions $f_i(x, y)=Tr\left(x \pi_i(y)\right)+h_i(y)$ for $i \in\{1,2,3,4\}$ and $x, y \in$ $\mathbb{F}_{2^{m}}$ satisfy
					\begin{equation*}
						h_1(\pi_1^{-1}(y))+h_2(\pi_2^{-1}(y))+h_3(\pi_3^{-1}(y))+h_4(\pi_4^{-1}(y))=1,
					\end{equation*}
					and, hence, $f=f_1|| f_2|| f_3||f_4\in\mathcal{B}_{2m+2}$ is bent. 
				\end{proposition}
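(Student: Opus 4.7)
The plan is to verify the dual bent condition~\eqref{eq: hi condition} directly and then invoke Theorem~\ref{th: Frobenius extended}. The key simplification comes from the fact that, by Theorem~\ref{th:permutA_m2nd class}, each $\pi_i$ is an involution, so $\pi_i^{-1}(y)=\alpha_i y^d$. Hence
$$h_i(\pi_i^{-1}(y))=Tr\!\left(\beta_i(\alpha_i y^d)^k\right)=Tr(\beta_i\alpha_i^k\, y^{dk})$$
for $i=1,2,3$, and the analogous identity with a trailing $+1$ for $i=4$. So the whole computation collapses to understanding the sum $\sum_i \beta_i\alpha_i^k$.

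Next, I would substitute the definitions of the $\beta_i$. By construction, $\beta_1\alpha_1^k=\sigma(\alpha_2)$, $\beta_2\alpha_2^k=\sigma(\alpha_3)$, $\beta_3\alpha_3^k=\sigma(\alpha_4)$, and $\beta_4\alpha_4^k=\sigma(\alpha_1)$, so using linearity of $Tr$ and the fact that we are in characteristic $2$, I obtain
$$\sum_{i=1}^{4} h_i(\pi_i^{-1}(y))=Tr\!\left(\bigl(\sigma(\alpha_1)+\sigma(\alpha_2)+\sigma(\alpha_3)+\sigma(\alpha_4)\bigr)y^{dk}\right)+1.$$
Since $\sigma$ is a permutation of $\{\alpha_1,\alpha_2,\alpha_3,\alpha_4\}$, the sum $\sigma(\alpha_1)+\sigma(\alpha_2)+\sigma(\alpha_3)+\sigma(\alpha_4)$ equals $\alpha_1+\alpha_2+\alpha_3+\alpha_4$, and from $\alpha_4=\alpha_1+\alpha_2+\alpha_3$ this evaluates to $0$ in $\F_{2^m}$. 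Therefore the trace term vanishes identically in $y$, and the whole sum equals $1$, which is precisely the dual bent condition~\eqref{eq: hi condition}.

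Finally, applying Theorem~\ref{th: Frobenius extended} (with $s(y)$ equal to the constant function $1$, which is admissible since $h_4=h_1+h_2+h_3+s$ holds by the very definition of $h_4$ above) yields that $f=f_1\|f_2\|f_3\|f_4\in\mathcal{B}_{2m+2}$ is bent. There is no real obstacle here: the only nontrivial ingredient is recognising that the choice of the exponents $\beta_i\alpha_i^k$ was engineered so that a permutation-invariant sum appears, which collapses by characteristic-$2$ cancellation. Note that the exponent $d$ enters only implicitly through the involution property $\pi_i^{-1}=\pi_i$; the condition $d^2\equiv 1\pmod{2^m-1}$ is used solely to guarantee that $\pi_i$ are involutions via Theorem~\ref{th:permutA_m2nd class}, and plays no further role in the trace cancellation.
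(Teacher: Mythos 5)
Your proof is correct and follows essentially the same computation as the paper: use the involution property to write $h_i(\pi_i^{-1}(y))=Tr(\beta_i\alpha_i^k y^{kd})$, note that $\beta_i\alpha_i^k$ telescopes to the values $\sigma(\alpha_j)$ so that the coefficient of $y^{kd}$ becomes $\sigma(\alpha_1)+\sigma(\alpha_2)+\sigma(\alpha_3)+\sigma(\alpha_4)=\alpha_1+\alpha_2+\alpha_3+\alpha_4=0$, and conclude bentness from the dual bent condition. One small inaccuracy that does not affect the argument: here $s=h_1+h_2+h_3+h_4=Tr\bigl((\beta_1+\beta_2+\beta_3+\beta_4)y^k\bigr)+1$, which in general is not the constant function $1$; this is harmless, since Theorem~\ref{th: Frobenius extended} admits an arbitrary $s\in\mathcal{B}_m$ once condition~\eqref{eq: hi condition} is verified.
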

				\begin{proof}
				Since all permutations $\pi_i(y)=\alpha_i y^d$ of $\F_{2^m}$ are involutions, we have 
					\begin{equation*}
						\begin{split}
							\sum\limits_{i=1}^4 	h_i(\pi_i^{-1}(y))&=Tr(\beta_1\alpha_1^ky^{kd}+\beta_2\alpha_2^ky^{kd}+\beta_3\alpha_3^ky^{kd}+\beta_4\alpha_4^ky^{kd})+1\\
							&=Tr((\sigma(\alpha_1)+\sigma(\alpha_2)+\sigma(\alpha_3)+\sigma(\alpha_4))y^{kd})+1=1,
						\end{split}
					\end{equation*}
				since  $\sigma(\alpha_1)+\sigma(\alpha_2)+\sigma(\alpha_3)+\sigma(\alpha_4)=\alpha_1+\alpha_2+\alpha_3+\alpha_4=0$. Consequently, $f=f_1|| f_2|| f_3||f_4\in\mathcal{B}_{2m+2}$ is bent.\qed
				\end{proof}
			\subsection{Bent functions outside $\cM^\#$ using monomial permutations}
			 In this subsection, we describe how one can concatenate Maiorana-McFarland bent functions stemming from non-linear permutation monomials in order to get infinite families of bent functions outside $\cM^\#$. While the case of non-quadratic monomials is an immediate consequence of Theorem~\ref{th: Outside MM}, the quadratic case cannot be covered by this result anymore. The reason for that is the condition ``the components of $\pi$ do not admit linear structures'' excludes the use of quadratic monomial permutations~\cite{CharKyure}. Therefore, the quadratic case needs a special treatment. In this context, we recall  that a quadratic permutation $\pi$ on $\F_2^m$  satisfies the $(P_1)$ property if and only if $\pi$  is an APN permutation, see \cite[Corollary 4.2]{PPKZ2023}. Fist, we recall the following characterization of linear structures of the components of permutation  monomials given in~\cite{CharKyure} (stated only for binary quadratic case), which is useful for our purpose.
			 \begin{theorem}  \cite[Theorem 5]{CharKyure} \label{th:Pas_Goh}
			 	Let $\delta \in \F_{2^m}$ and $1 \leq s \leq 2^m-2$ be such that $f(x)=Tr(\delta x^s)$ is not the zero function  on $\F_2^m$. Then, when $wt_H(s)=2$ the function $f$ has a linear structure if and only if the following is true:\\
			 	(ii): $s=2^j(2^i+1)$, 
			 	where $0 \leq i, j \leq m-1$, 
			 	$i \not \in \{0,m/2\}$.
			 	In this case,  $\alpha \in \F_{2^m}$ is a linear structure of $f$ if and only if it satisfies $(\delta^{2^{m-j}} \alpha^{2^i+1})^{2^i-1}+1=0$. More exactly the linear space $\Lambda$ of $f$ is as follows. 
			 	Denote $\sigma =\gcd(m,2i)$. Then, $\Lambda=\{0\}$ if $\delta$ is not a $(2^i+1)$-th power in $\F_{2^m}$. Otherwise, if $\delta=\beta^{2^j(2^i+1)}$ for some $\beta \in \F_{2^m}$, it holds that $\Lambda=\beta^{-1}\F_{2^\sigma}$.
			 \end{theorem}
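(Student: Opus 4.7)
The plan is to derive the stated equation from a direct first-derivative computation and then analyze its solutions via cyclic-group bookkeeping. Writing $s=2^j(2^i+1)=2^j+2^{i+j}$ and expanding $(x+\alpha)^s=(x+\alpha)^{2^{i+j}}(x+\alpha)^{2^j}$ by the characteristic-two Frobenius identity gives
\[
D_\alpha f(x)=Tr\bigl(\delta\alpha^{2^j}x^{2^{i+j}}\bigr)+Tr\bigl(\delta\alpha^{2^{i+j}}x^{2^j}\bigr)+Tr(\delta\alpha^s),
\]
whose last summand is constant in $x$. So $\alpha$ is a linear structure iff the two linear-in-$x$ traces cancel for every $x$. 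Applying the standard identity $Tr(cx^{2^k})=Tr(c^{2^{m-k}}x)$ to both terms reduces this to the algebraic equation $\delta^{2^{m-i-j}}\alpha^{2^{m-i}}+\delta^{2^{m-j}}\alpha^{2^i}=0$ in $\F_{2^m}$; raising to the $2^{i+j}$ power (to clear negative Frobenius exponents), isolating, and factoring $2^{2i}-1=(2^i-1)(2^i+1)$ rearranges it into the equivalent form $(\delta^{2^{m-j}}\alpha^{2^i+1})^{2^i-1}+1=0$ as claimed.

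The case $i=0$ contradicts $wt_H(s)=2$ and may be discarded. For $i=m/2$, I would argue by contradiction: the $(2^{m/2}-1)$-th roots of unity in $\F_{2^m}^*$ are exactly $\F_{2^{m/2}}^*$, and $\alpha^{2^{m/2}+1}\in\F_{2^{m/2}}^*$ automatically for every $\alpha\neq 0$ (the relative norm lands in $\F_{2^{m/2}}$), so the displayed equation forces $\delta\in\F_{2^{m/2}}^*$; but then $\delta x^s\in\F_{2^{m/2}}$ for every $x$, and the absolute trace vanishes identically on $\F_{2^{m/2}}$ by transitivity of the trace, contradicting $f\not\equiv 0$. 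Hence no nonzero $\alpha$ is a linear structure when $i=m/2$, justifying the exclusion in the characterization.

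For $i\notin\{0,m/2\}$ I would rewrite the equation as $\alpha^{2^{2i}-1}=\delta^{-2^{m-j}(2^i-1)}$ and, setting $\sigma=\gcd(2i,m)$, use the fact that the image of the $(2^{2i}-1)$-th power map on $\F_{2^m}^*$ is the subgroup of index $2^\sigma-1$. Since $2^{m-j}$ is a unit modulo $2^m-1$ and $2^{2i}-1=(2^i-1)(2^i+1)$, the right-hand side being a $(2^{2i}-1)$-th power translates, modulo a $(2^i-1)$-th root of unity, to $\delta$ being a $(2^i+1)$-th power; a short case analysis on the parity of $m/\gcd(i,m)$, using $\gcd(2^i+1,2^m-1)=2^{\gcd(i,m)}+1$ in the even case and $1$ in the odd case, shows that every $(2^i-1)$-th root of unity in $\F_{2^m}^*$ is itself a $(2^i+1)$-th power, so the two conditions collapse to ``$\delta$ is a $(2^i+1)$-th power''. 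When $\delta=\beta^{2^j(2^i+1)}$, direct substitution verifies that $\alpha=\beta^{-1}$ is a particular solution, and multiplying by the multiplicative group of $(2^{2i}-1)$-th roots of unity (which equals $\F_{2^\sigma}^*$) and adjoining $\alpha=0$ yields $\Lambda=\beta^{-1}\F_{2^\sigma}$. The main technical obstacle is precisely this cyclic-group lemma, which is what allows the apparent weaker condition ``$\delta$ is a $(2^i+1)$-th power up to a $(2^i-1)$-th root of unity'' to collapse to the clean stated form; everything else is mechanical once the derivative identity has been reduced to its final shape.
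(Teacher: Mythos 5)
This statement is quoted verbatim from Charpin--Kyureghyan \cite[Theorem 5]{CharKyure}; the paper gives no proof of it, so there is no internal argument to compare yours against. Judged on its own, your proof is correct and is the natural argument (and, in spirit, the one of the original source): expand $D_\alpha f$, use $Tr(cx^{2^k})=Tr(c^{2^{m-k}}x)$ to reduce the linear-structure condition to $\delta^{2^{m-j}}\alpha^{2^i}+\delta^{2^{m-i-j}}\alpha^{2^{m-i}}=0$, and rearrange (legitimately, since $f\not\equiv 0$ forces $\delta\neq 0$) into $(\delta^{2^{m-j}}\alpha^{2^i+1})^{2^i-1}=1$ for $\alpha\neq 0$. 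Your exclusion of $i=m/2$ is sound: the equation forces $\delta^{2^{m-j}}\in\F_{2^{m/2}}^*$ because the norm $\alpha^{2^{m/2}+1}$ and the $(2^{m/2}-1)$-th roots of unity both lie in $\F_{2^{m/2}}^*$, hence $\delta\in\F_{2^{m/2}}$ and $f\equiv 0$ by trace transitivity, a contradiction. The cyclic-group step you flag as the technical core is also fine and does close: with $d=\gcd(i,m)$, in the case $m/d$ odd one has $\gcd(2^i+1,2^m-1)=1$ and both conditions are vacuous, while if $m/d$ is even then $\sigma=2d\mid m$, so $2^{2d}-1\mid 2^m-1$ and consequently the $(2^i-1)$-th roots of unity, which form $\F_{2^d}^*$, are all $(2^i+1)$-th powers; this is exactly what collapses ``$(2^i+1)$-th power up to a $(2^i-1)$-th root of unity'' to ``$(2^i+1)$-th power''. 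The kernel of $\alpha\mapsto\alpha^{2^{2i}-1}$ is $\F_{2^\sigma}^*$ and $\alpha=\beta^{-1}$ is a particular solution when $\delta=\beta^{2^j(2^i+1)}$, giving $\Lambda=\beta^{-1}\F_{2^\sigma}$ as stated. The only caveat worth recording is not yours but the quoted statement's: the opening ``if and only if'' is loose, since for $m/d$ even and $\delta$ not a $(2^i+1)$-th power one has $\Lambda=\{0\}$ even though $i\notin\{0,m/2\}$; your argument proves the precise version, namely the later description of $\Lambda$, which is what the paper actually uses in the proof of Theorem~\ref{th:quadperm}.
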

	With this result, which states that and only linear and quadratic monomial permutations always have some components that admit 
	non-zero linear structures, we are ready to prove the main theorem of this subsection.		 	
	\begin{theorem}\label{th:quadperm}
					Let $m \geq 3$ and the Maiorana-McFarland bent functions $$f_i(x, y)=Tr\left(x \pi_i(y)\right)+h_i(y)$$ for $i \in\{1,2,3,4\}$ and $x, y \in$ $\mathbb{F}_{2^{m}}$ be defined as in Proposition~\ref{th: concatenating monomials}, where $\pi_i(y)=\alpha_iy^d$ are four  permutations of $\F_{2^m}$ defined as in Theorem~\ref{th:permutA_m2nd class} satisfying  additionally the $(P_1)$ property. If $wt(d)>1$, then $f=f_1|| f_2|| f_3||f_4\in\mathcal{B}_{2m+2}$ is bent and outside $\cM^\#$.
				\end{theorem}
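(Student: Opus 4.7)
The plan is to invoke Theorem~\ref{th: Outside MM} for the permutations $\pi_i(y)=\alpha_i y^d$ together with the Boolean functions $h_i$ from Proposition~\ref{th: concatenating monomials}. The bentness of $f=f_1||f_2||f_3||f_4$ is already delivered by Proposition~\ref{th: concatenating monomials}, which verifies the dual bent condition \eqref{eq: hi condition} and hence, through Theorem~\ref{th: Frobenius extended}, yields bentness; the only substantive task is to check the three structural conditions of Theorem~\ref{th: Outside MM}.

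Conditions (1) and (2) are handled almost directly. The $(\mathcal{A}_m)$ property of $\pi_1,\pi_2,\pi_3$ is exactly the content of Theorem~\ref{th:permutA_m2nd class}, and $(P_1)$ for $\pi_1,\pi_2,\pi_3$ is part of the standing hypothesis. It only remains to check $(P_1)$ for $\pi_1+\pi_2=(\alpha_1+\alpha_2)y^d$. Since $D_uD_v\bigl((\alpha_1+\alpha_2)y^d\bigr)=(\alpha_1+\alpha_2)\,D_uD_v(y^d)$ and $\alpha_1\ne\alpha_2$ (by the distinctness of the $\alpha_i$ built into Theorem~\ref{th:permutA_m2nd class}), the nonvanishing of second-order derivatives transfers from $\pi_1=\alpha_1 y^d$ to $(\alpha_1+\alpha_2)y^d$, so $(P_1)$ is free.

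The core step, and the step I expect to be the main obstacle, is Condition~(3): every component $Tr(\delta y^d)$ with $\delta=\gamma(\alpha_1+\alpha_2)\in\F_{2^m}^*$ must be free of nonzero linear structures. The hypothesis $wt(d)>1$ rules out the trivial linear case. For $wt_H(d)\ge 3$ I would invoke the full statement of \cite[Theorem~5]{CharKyure} (of which Theorem~\ref{th:Pas_Goh} captures only the $wt_H=2$ fragment) to conclude that $Tr(\delta y^d)$ has no nonzero linear structure for any $\delta\in\F_{2^m}^*$. The genuinely delicate subcase is $wt_H(d)=2$, where Theorem~\ref{th:Pas_Goh}(ii) typically forces a linear structure whenever $d=2^j(2^i+1)$ with $i\notin\{0,m/2\}$; the plan here is a short arithmetic argument showing that the simultaneous constraints $d^2\equiv 1\pmod{2^m-1}$ and $\gcd(d,2^m-1)=1$ essentially exclude weight-$2$ involutive exponents for $m\ge 4$ (a case-by-case check on the bit pattern of $d=2^i+2^j$ and of $d^2$ modulo $2^m-1$, together with the APN/$(P_1)$ equivalence of \cite[Corollary~4.2]{PPKZ2023} when the exponent does survive), so that the quadratic subcase collapses to a bounded, directly verifiable list of small-$m$ instances. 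Once Condition~(3) is confirmed, Theorem~\ref{th: Outside MM} applies and immediately yields $f\notin\cM^\#$, completing the proof.
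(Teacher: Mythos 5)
For the case $wt(d)>2$ your argument coincides with the paper's: bentness comes from Proposition~\ref{th: concatenating monomials}, the $(\mathcal{A}_m)$ and $(P_1)$ hypotheses give conditions 1--2 of Theorem~\ref{th: Outside MM} (your observation that $(P_1)$ transfers to $\pi_1+\pi_2=(\alpha_1+\alpha_2)y^d$ is correct and implicit in the paper), and the absence of linear structures for components of monomials of weight at least $3$ is exactly what the paper extracts from \cite[Theorem 5]{CharKyure} via Theorem~\ref{th:Pas_Goh}. So that half is fine.

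The genuine gap is in the quadratic case $wt(d)=2$, which is precisely the case the paper regards as the substance of this theorem. The paper does not try to argue the case away: it proves it directly, for any $m$, by noting that the $(P_1)$/APN hypothesis forces the four $f_i$ to share the unique canonical $\mathcal{M}$-subspace, and then verifying conditions 1)--3) of Theorem~\ref{th:sharingcommonsubspace} through an explicit computation of $D_{u}f_1(x,y)+D_{u}f_2((x,y)+v)$, in which the mixed term $Tr\bigl(x(\alpha_1+\alpha_2)\bigl(y^{2^{i+j}}(u_2)^{2^j}+y^{2^{j}}(u_2)^{2^{i+j}}\bigr)\bigr)$ cannot be cancelled because $\alpha_1\neq\alpha_2$. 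Your plan instead is to show that weight-two involutive exponents barely exist and to ``directly verify'' whatever survives. The arithmetic exclusion you sketch is in fact true (writing $d=2^a+2^b$, the reduction of $d^2=2^{2a}+2^{a+b+1}+2^{2b}$ modulo $2^m-1$ is a sum of at most three powers of two that must equal $2^m$, which forces $m=3$ and $d=6$), but you do not carry it out, and, more importantly, the surviving instance $m=3$, $d=2^m-2$ is exactly the situation of the paper's Example~\ref{ex: bent from monomials outside MM 2} and is left unproved in your proposal: Theorem~\ref{th: Outside MM} is unavailable there both because it assumes $m>3$ and because its third condition genuinely fails (quadratic monomial components do admit linear structures by Theorem~\ref{th:Pas_Goh}), the cited APN/$(P_1)$ equivalence by itself does not yield the outside-$\cM^\#$ conclusion, and ``a bounded, directly verifiable list'' is an unexecuted computation over all admissible $\alpha_i$, $k$ and $\sigma$ rather than an argument. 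So the key idea of the paper's proof for the quadratic case --- the direct application of Theorem~\ref{th:sharingcommonsubspace} with the derivative computation above --- is missing from your proposal, and without it (or a completed exclusion plus exhaustive verification) the theorem is not established for $wt(d)=2$.
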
	
				\begin{proof}
				For  $wt(d)>2$,	the statement follows from Theorem~\ref{th: Outside MM} and  Theorem~\ref{th:Pas_Goh}. 
					
					For  $wt(d)=2$, without loss of generality, we assume $d=2^j(2^i+1)$.	Since all $\pi_i$ have the property~$(P_1)$ (and are necessarily APN permutations by Corollary 4.2 in \cite{PPKZ2023}), we have that by Theorem 3.1 in~\cite{PPKZ2023}, the functions $f_i$ share the unique canonical $\mathcal{M}$-subspace $U=\F_2^m \times \{0_m\}$ of dimension $m$. Thus,  we can apply Theorem   \ref{th:sharingcommonsubspace} and verify that the conditions $1)$ to $3)$ are satisfied.
				
						Let $V$ be an $(\frac{n}{2}-1)$-dimensional subspace  of $ \F_2^{2m}$  such that $D_aD_bf_i=0$, for all $a,b \in V$; $i=1, \ldots ,4.$
					If for any $v\in  \F_2^{2m} $ and  any such $V \subset  \F_2^{2m}$, there exist $u^{(1)},u^{(2)},u^{(3)}\in V $ such that the following three conditions hold simultaneously (replacing $x \in \F_2^{2m}$ in Theorem   \ref{th:sharingcommonsubspace} by $(x,y) \in \F_2^m  \times \F_2^m$): 
					\begin{enumerate}
						\item[1)] $D_{u^{(1)}}f_1(x,y)+D_{u^{(1)}}f_2((x,y)+v)\neq 0,~\textit{or}~D_{u^{(1)}}f_3(x,y)+D_{u^{(1)}}f_4((x,y)+v)\neq 0,$
						\item[2)] $D_{u^{(2)}}f_1(x,y)+D_{u^{(2)}}f_3((x,y)+v)\neq 0,~\textit{or}~D_{u^{(2)}}f_2(x,y)+D_{u^{(2)}}f_4((x,y)+v)\neq 0,$
						\item[3)] $D_{u^{(3)}}f_2(x,y)+D_{u^{(3)}}f_3((x,y)+v)\neq 0,~\textit{or}~D_{u^{(3)}}f_1(x,y)+D_{u^{(3)}}f_4((x,y)+v)\neq 0,$ \\
				then by  Theorem \ref{th:sharingcommonsubspace} we conclude that $f$ is outside $\cM^\#$. 
				\end{enumerate}
		
	In what follows, we show  that the above conditions are actually satisfied. 	Denoting $u^{(i)}=(u_1^{(i)},u_2^{(i)}) \neq (0_m,0_m)$,  the translates of  $f_i$ correspond to,
		\begin{equation}\label{equ quadratic1 corollary}
			\begin{array}{rl}
				&	f_i(x+u^{(1)}_1,y+u^{(1)}_2)\\=&Tr\left((x+u^{(1)}_1) \pi_i(y+u^{(1)}_2)\right)+h_i(y+u^{(1)}_2)\\
				=&Tr\left((x+u^{(1)}_1) \alpha_i(y+u^{(1)}_2)^d\right)+h_i(y+u^{(1)}_2)\\
				=&Tr\left((x+u^{(1)}_1) \alpha_i\left((y)^d+(u^{(1)}_2)^d+(y)^{2^{i+j}}(u^{(1)}_2)^{2^j}+(y)^{2^{j}}(u^{(1)}_2)^{2^{i+j}}\right)\right)\\
				+&h_i(y+u^{(1)}_2)\\
				=&Tr\left(x \alpha_i\left((y)^d+(u^{(1)}_2)^d+(y)^{2^{i+j}}(u^{(1)}_2)^{2^j}+(y)^{2^{j}}(u^{(1)}_2)^{2^{i+j}}\right)\right)+G(y), 
				
			\end{array}
		\end{equation}
		where 
		\begin{equation*}
			\begin{split}
				G(y)&= Tr\left(u^{(1)}_1 \alpha_i\left((y)^d+(u^{(1)}_2)^d+(y)^{2^{i+j}}(u^{(1)}_2)^{2^j}+(y)^{2^{j}}(u^{(1)}_2)^{2^{i+j}}\right)\right)\\ & +h_i(y+u^{(1)}_2).
			\end{split}
		\end{equation*}
		Then,
			\begin{equation}\label{equ quadratic2 corollary}
				\begin{array}{rl}
				D_{u^{(1)}}f_i(x,y)=&Tr\left(x \pi_i(y)\right)+h_i(y)+Tr\left((x+u^{(1)}_1) \pi_i(y+u^{(1)}_2)\right)+h_i(y+u^{(1)}_2)\\
				=&Tr\left(x \alpha_i\left((u^{(1)}_2)^d+(y)^{2^{i+j}}(u^{(1)}_2)^{2^j}+(y)^{2^{j}}(u^{(1)}_2)^{2^{i+j}}\right)\right)\\
				+&G(y)+h_i(y),
				\end{array}
			\end{equation}
		and  similarly
		\begin{equation}\label{equ quadratic3 corollary}
			\begin{array}{rl}
			&	D_{u^{(1)}}f_i(x+v_1,y+v_2)			\\	=&Tr\left((x+v_1) \alpha_i\left((u^{(1)}_2)^d+(y+v_2)^{2^{i+j}}(u^{(1)}_2)^{2^j}+(y+v_2)^{2^{j}}(u^{(1)}_2)^{2^{i+j}}\right)\right)\\
			+&G(y+v_2)+h_i(y+v_2).		
				
			\end{array}
		\end{equation}
		From  (\ref{equ quadratic2 corollary}) and (\ref{equ quadratic3 corollary}), we have  
			\begin{equation}\label{equ quadratic4 corollary}
			\begin{array}{rl}
				&	D_{u^{(1)}}f_1(x,y)	+D_{u^{(1)}}f_2(x+v_1,y+v_2)			\\		=&Tr\left(x \alpha_1\left((u^{(1)}_2)^d+(y)^{2^{i+j}}(u^{(1)}_2)^{2^j}+(y)^{2^{j}}(u^{(1)}_2)^{2^{i+j}}\right)\right)+G(y)+h_i(y)\\+&Tr\left((x+v_1) \alpha_2\left((u^{(1)}_2)^d+(y+v_2)^{2^{i+j}}(u^{(1)}_2)^{2^j}+(y+v_2)^{2^{j}}(u^{(1)}_2)^{2^{i+j}}\right)\right)\\+&G(y+v_2)+h_i(y+v_2),\\
			\end{array}
		\end{equation}
	and furthermore (\ref{equ quadratic4 corollary}) can be written as 
		$$Tr\left(x (\alpha_1+\alpha_2)\left((y)^{2^{i+j}}(u^{(1)}_2)^{2^j}+(y)^{2^{j}}(u^{(1)}_2)^{2^{i+j}}\right)\right)+G'(y)+S(x),$$
		where $G'(y)$ and $S(x)$
		 cover   the terms in the ANF of $D_{u^{(1)}}f_1(x,y)	+D_{u^{(1)}}f_2(x+v_1,y+v_2)	 $ that exclusively  use the variables $y$ and $x$, respectively.  Thus, 
		we have 
		$$	D_{u^{(1)}}f_1(x,y)	+D_{u^{(1)}}f_2(x+v_1,y+v_2)	\neq 0$$
		since   if $\alpha_1\neq \alpha_2 $, then $$Tr\left(x (\alpha_1+\alpha_2)\left((y)^{2^{i+j}}(u^{(1)}_2)^{2^j}+(y)^{2^{j}}(u^{(1)}_2)^{2^{i+j}}\right)\right)\neq const.,$$
		where the functions $G'(y)$ and $S(x)$ clearly cannot cancel this term since it contains quadratic terms of the form $Tr(xy^{2^{i+j}})	$ or 
		$Tr(xy^{2^{j}})	$.
		Similarly, we have 
			$$	D_{u^{(2)}}f_1(x,y)	+D_{u^{(2)}}f_3(x+v_1,y+v_2)	\neq 0$$
		if  $\alpha_1\neq \alpha_3 $,	and 
				$$	D_{u^{(3)}}f_2(x,y)	+D_{u^{(3)}}f_3(x+v_1,y+v_2)	\neq 0$$
				 if $\alpha_2\neq \alpha_3 $.
				
				Thus, by Theorem \ref{th:sharingcommonsubspace},
					 $f$ is outside $\cM^\#$. \qed
				\end{proof}
			\begin{remark}
				\emph{1.} Notice that there are no quadratic APN permutations  on $\F_2^m$ for even $m$ \cite{Seberry94}, but we do  not refine the parity of $m$ in Theorem \ref{th:quadperm} since the statement is given in a wider context.   \ \\
				\noindent\emph{2.} Using Theorem~\ref{th:permutA_m2nd class}, we were able to find only a few sporadic examples of quadratic permutations, giving rise to bent functions outside $\cM^\#$. However, we believe that the proof techniques used for the case $wt(d)=2$ in the proof of Theorem~\ref{th:quadperm}, can be useful for deriving similar results for other classes of quadratic permutations.
				\end{remark}
				
			
				In the following example, we indicate that (due to Theorem~\ref{th:quadperm}) the conditions of 	Theorem~\ref{th: Outside MM} can indeed be relaxed, since the permutations  $\pi_i(y)\mapsto \alpha_i y^{-1}$ that we use in this example are quadratic and therefore do not satisfy the conditions of Theorem~\ref{th: Outside MM}. As already mentioned in Theorem~\ref{th:Pas_Goh}, monomial quadratic permutations have at least one component that admits  non-trivial linear structures and essentially Theorem \ref{th:quadperm} is the first result in the literature that ensures the outside $\cM^\#$ property  using this class of permutations.  
				\begin{example}\label{ex: bent from monomials outside MM 2}
					Let $m = 3$ and the multiplicative group of $\F_{2^{3}}$ be given by $\F_{2^{3}}^*=\langle a \rangle$, where
					the primitive element $a$ satisfies $a^3 + a + 1=0$. Let $d=2^m-2=6$, which satisfies $d^2 \equiv 1 \mod 7$. Define $\alpha_1=a, \alpha_2=a^4,\alpha_3=a^6$ and $\alpha_4=\alpha_1+\alpha_2+\alpha_3=1$.  By Theorem~\ref{th:permutA_m2nd class}, the mappings $\pi_i(y)=\alpha_i y^d$, for $i=1,2,3$, are involutions, as well as $\pi_4=\pi_1+\pi_2+\pi_3$. Set $k=3$ and $\sigma=id$. Then, quadratic Boolean functions $h_i\in\mathcal{B}_3$ are given in the following way: 
					\begin{equation}
						\begin{split}
							h_1(y)&=Tr(\beta_1y^k)=Tr\left(\frac{\alpha_2}{\alpha_1^3} y^3\right)=Tr(a^3 y^3),\\
							h_2(y)&=Tr(\beta_2y^k)=Tr\left(\frac{\alpha_3}{\alpha_2^3} y^3\right)=Tr(a^2 y^3),\\
							h_3(y)&=Tr(\beta_3y^k)=Tr\left(\frac{\alpha_4}{\alpha_3^3} y^3\right)=Tr(a y^3),\\
							h_4(y)&=Tr(\beta_4y^k) + 1=Tr\left(\frac{\alpha_1}{\alpha_4^3} y^3\right) +1=Tr(y^3) + 1.
						\end{split}
					\end{equation}
					Since the Boolean functions $h_i$  satisfy the condition~\eqref{eq: hi condition} of Theorem~\ref{th: Frobenius extended}, that is, 
						$\sum_{i=1}^{4} h_i(\pi_i^{-1}(x))=1$,
					we have that $f=f_1|| f_2|| f_3||f_4\in\mathcal{B}_{8}$ is bent, where  $f_i(x,y)=Tr(x\pi_i(y))+h_i(y)$ for $i=1,2,3,4$, with $x,y\in\F_{2^3}$.  The algebraic normal form of $f\in\mathcal{B}_8$ is given by
					\begin{equation*}
						\begin{split}
							f(z)=&
							z_2 z_4 + z_1 z_5 + z_4 z_5 + z_3 z_4 z_5 + z_6 + z_1 z_6 + z_3 z_6 + z_1 z_4 z_6 + z_2 z_4 z_6 \\ + & z_2 z_5 z_6 + z_4 z_7 + z_1 z_4 z_7 + z_2 z_4 z_7 + z_3 z_4 z_7 + z_5 z_7 + 
							z_2 z_5 z_7 + z_3 z_5 z_7 \\ + & z_1 z_4 z_5 z_7 + z_3 z_4 z_5 z_7 + z_6 z_7 + z_1 z_6 z_7 + z_2 z_6 z_7 + z_1 z_4 z_6 z_7 + z_5 z_6 z_7 \\ + & z_1 z_5 z_6 z_7 + z_2 z_5 z_6 z_7 + 
							z_3 z_5 z_6 z_7 + z_3 z_4 z_8 + z_2 z_5 z_8 + z_1 z_4 z_5 z_8 \\ +& z_2 z_4 z_5 z_8 + z_1 z_6 z_8 + z_2 z_4 z_6 z_8 + z_3 z_4 z_6 z_8 + z_3 z_5 z_6 z_8 + z_7 z_8 + z_4 z_7 z_8 \\ + & 
							z_5 z_7 z_8 + z_6 z_7 z_8 + z_5 z_6 z_7 z_8.
						\end{split}
					\end{equation*}
					Using a computer algebra system, one can check (for example, as described in~\cite{PPKZ2023}) that the obtained bent function $f\in\mathcal{B}_8$ is outside  $\cM^\#\cup\mathcal{PS}^\#$.
				\end{example}	
			\begin{openproblem}
			As we showed in Section~\ref{sec: 2 Secondary constructions}, it is possible to derive new permutations $\phi_i$ of $\F_2^{m+k}$ with the $(\mathcal{A}_{m+k})$ property from permutations $\pi_i$ of $\F_2^{m}$ with the $(\mathcal{A}_{m})$ property, and additionally to derive Boolean functions $h_i'\in\mathcal{B}_{m+k}$ from   $h_i\in\mathcal{B}_{m}$, such that $f'=f'_1||f'_2||f'_3||f'_4\in\mathcal{B}_{2m+2k+2}$ is bent, where $f'_i(x,y)=x\cdot\phi_i(y)+h'_i(y)$ is bent on $\F_2^{m+k}\times\F_2^{m+k}$. 
			Assuming that $f=f_1||f_2||f_3||f_4\in\mathcal{B}_{2m+2}$ is outside $\cM^\#$, where $f_i(x,y)=x\cdot\pi_i(y)+h_i(y)$ is bent on $\F_2^m\times\F_2^m$, does the function  $f'=f'_1||f'_2||f'_3||f'_4$ belong to $\mathcal{M}^\#$?
			\end{openproblem}
				\section{An application to the design of homogeneous cubic bent functions}\label{sec: 5 Hom bent}
				Despite the intensive study of bent functions in the past decades, only a few works are dedicated to the study of the subclass of \textit{homogeneous bent functions} (we refer the reader to the seminal paper~\cite{Charnes2002} as well as recent contributions~\cite{PolujanPhD,Polujan2020,ZPBW23}). In this section, we show how bent functions satisfying the dual bent condition and permutations with the $(\mathcal{A}_m)$ property can be used for the construction of homogeneous cubic bent functions, which (together with quadratic) are the only homogeneous bent functions known to exist.
				\begin{proposition}\label{prop: homogeneous concatenation}
					Let $f_1\in\mathcal{B}_n$ be a homogeneous cubic bent function. Let $q_1,q_2 \in\mathcal{B}_n$ be two homogeneous quadratic functions, such that $f_2=f_1+q_2$ and $f_3=f_1+q_3$ are bent, and additionally $f_1+f_2+f_3$ is also bent. Defining $f_4=f_1+f_2+f_3+s$ for $s\in\mathcal{B}_n$, the function $f=f_1||f_2||f_3||f_4\in\mathcal{B}_{n+2}$ is homogeneous cubic bent if and only if $f_1^*+f_2^*+f_3^*=(f_1+f_2+f_3+s)^*+1$, where $s\in\mathcal{B}_n$ is a linear function.
				\end{proposition}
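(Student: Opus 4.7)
The plan is to prove the equivalence by directly expanding the ANF of the concatenation and then invoking the dual bent condition. First, I would substitute the definitions of $f_2, f_3, f_4$ into the canonical ANF formula~\eqref{eq:ANF_4conc}. Since $f_1+f_2 = q_2$, $f_1+f_3 = q_3$, and $f_1+f_2+f_3+f_4 = s$, the formula collapses to
\[
f(z,z_{n+1},z_{n+2}) = f_1(z) + z_{n+1} q_3(z) + z_{n+2} q_2(z) + z_{n+1} z_{n+2} s(z).
\]
This is the central computation, and everything else follows from inspecting it.

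Next, I would analyze when the right-hand side is homogeneous of degree $3$. The term $f_1(z)$ is homogeneous cubic by hypothesis. Because $q_2$ and $q_3$ are homogeneous quadratic, both $z_{n+1} q_3(z)$ and $z_{n+2} q_2(z)$ consist of degree-$3$ monomials only. The remaining term $z_{n+1} z_{n+2} s(z)$ produces monomials of degree $2 + \deg(s)$, so homogeneity of degree $3$ forces every monomial of $s$ to have degree exactly $1$, i.e. $s$ must be a (homogeneous) linear function. Conversely, if $s$ is linear, the whole ANF is a sum of degree-$3$ monomials, and $f$ is homogeneous cubic.

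For the bentness part, I would simply invoke the dual bent condition \cite[Theorem III.1]{HPZ2019}: the concatenation $f = f_1||f_2||f_3||f_4$ of bent functions is bent if and only if $f_1^* + f_2^* + f_3^* + f_4^* = 1$. By the definition $f_4 = f_1+f_2+f_3+s$, this rewrites as
\[
f_1^* + f_2^* + f_3^* = (f_1+f_2+f_3+s)^* + 1,
\]
which is precisely the stated condition. Combining the two observations gives the claimed equivalence: $f$ is homogeneous cubic bent if and only if the displayed dual identity holds and $s$ is linear.

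There is no real obstacle in this argument; the only thing to watch is that the forward direction of ``homogeneous cubic'' actually needs $s$ to have no constant term (otherwise $z_{n+1}z_{n+2}$ would appear as a quadratic monomial), which is included in the convention that a linear function is a homogeneous polynomial of degree $1$. Everything else is a direct consequence of the ANF formula~\eqref{eq:ANF_4conc} and the dual bent criterion, with the hypotheses on $f_1, f_2, f_3$ and $f_1+f_2+f_3$ being bent ensuring that the dual functions $f_i^*$ are all well-defined.
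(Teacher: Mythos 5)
Your proposal is correct and follows essentially the same route as the paper's proof: substitute into the ANF formula \eqref{eq:ANF_4conc} to get $f=f_1+z_{n+1}q_3+z_{n+2}q_2+z_{n+1}z_{n+2}s$, observe that homogeneity of degree $3$ forces $s$ to be linear (or zero), and invoke the dual bent condition of \cite[Theorem III.1]{HPZ2019} with $f_4=f_1+f_2+f_3+s$ for bentness. Your added remark that $s$ must have no constant term is a fair point of care, but it does not change the argument.
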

				\begin{proof}
					Substituting the defined functions $f_i$ in the bent concatenation~\eqref{eq:ANF_4conc}, we get
					\begin{equation}\label{eq: hom proof}
						\begin{split}
							f(z,z_{n+1},z_{n+2})&=f_1(z) +  z_{n+1}(f_1 +  f_3)(z)\\ &+  z_{n+2}(f_1 +  f_2)(z) +  z_{n+1}z_{n+2}(f_1 +  f_2 +  f_3 +  f_4)(z) \\
							&=f_1(z) +  z_{n+1}q_3(z) +  z_{n+2}q_2(z) +  z_{n+1}z_{n+2}s(z).
						\end{split}
					\end{equation}
					Since the first three terms in~\eqref{eq: hom proof} are homogeneous and cubic, the last term $z_{n+1}z_{n+2}s(z)$ must be either homogeneous and cubic, or zero; this is possible if and only if the function $s\in\mathcal{B}_n$ is linear. Since $f_1^*+f_2^*+f_3^*=(f_1+f_2+f_3+s)^*+1$, we have that the dual bent condition $f_1^*+  f_2^* + f_3^*   +  f_4^* =1$ is satisfied, and hence $f\in\mathcal{B}_{n+2}$ is bent. \qed
				\end{proof}
				\begin{example}
					Consider the following homogeneous functions $f_1,q_2,q_3,s\in\mathcal{B}_8$, which are given by their algebraic normal forms as follows:
					\begin{equation*}
						\begin{split}
							f_1(z)= & z_1 z_2 z_5 +  z_1 z_2 z_8 +  z_1 z_3 z_4 +  z_1 z_3 z_5 +  z_1 z_3 z_6 +  z_1 z_3 z_7 +  z_1 z_4 z_5 +  z_1 z_4 z_7 \\ 
							+&  z_1 z_4 z_8 + z_1 z_5 z_8 +  z_1 z_6 z_8 +  z_2 z_3 z_4 +  z_2 z_3 z_5 +  z_2 z_4 z_5 +  z_2 z_4 z_6 +  z_2 z_4 z_8 \\ 
							+&  z_2 z_5 z_6 +  z_2 z_6 z_7 + z_2 z_6 z_8 +  z_2 z_7 z_8 +  z_3 z_4 z_6 +  z_3 z_4 z_8 +  z_3 z_5 z_6 +  z_3 z_5 z_7 \\ 
							+&  z_3 z_6 z_8 +  z_4 z_7 z_8 +  z_5 z_6 z_7 + z_5 z_6 z_8,\\
							q_2(z)= &z_1 z_4 + z_1 z_5 + z_1 z_7 + z_5 z_7 + z_1 z_8 + z_4 z_8 + z_6 z_7 + z_6 z_8 + z_7 z_8, \\
							q_3(z)=&z_1 z_3 + z_1 z_4 + z_1 z_7 + z_1 z_8 + z_2 z_3 + z_2 z_8  + z_3 z_5 + z_3 z_8 + z_4 z_7 \\ 
							+& z_5 z_6 + z_6 z_7 + z_7 z_8,\\
							s(z)=&  z_1 + z_4 + z_6 + z_8.\\
						\end{split}
					\end{equation*}
					One can check that the functions $f_1,q_2,q_3,s\in\mathcal{B}_8$ satisfy the conditions of Proposition~\ref{prop: homogeneous concatenation}, and hence $f=f_1||f_2||f_3||f_4\in\mathcal{B}_{10}$ constructed as in Proposition~\ref{prop: homogeneous concatenation} is homogeneous cubic bent. Notably, there exists a linear non-degenerate transformation  $z\mapsto zA$, where the matrix $A$ is given by
					$$A=\scalebox{0.85}{$\left(
						\begin{array}{cccccccc}
							1 & 0 & 0 & 1 & 0 & 1 & 0 & 1 \\
							0 & 1 & 0 & 1 & 0 & 1 & 1 & 0 \\
							0 & 0 & 1 & 1 & 0 & 0 & 1 & 1 \\
							0 & 0 & 0 & 0 & 1 & 1 & 1 & 1 \\
							0 & 0 & 0 & 1 & 0 & 0 & 0 & 0 \\
							0 & 0 & 0 & 0 & 0 & 1 & 0 & 0 \\
							0 & 0 & 0 & 0 & 0 & 0 & 1 & 0 \\
							0 & 0 & 0 & 0 & 0 & 0 & 0 & 1 \\
						\end{array}
						\right)$}, $$
					such that  $f_i(zA)=x\cdot \pi_i(y)+h_i(y)$, where permutations $\pi_i$ and Boolean functions $h_i$ are defined in Example~\ref{ex:decomposing_h}, and hence, permutations $\pi_i$ have the $(\mathcal{A}_4)$ property. Finally, we note that the function $f\notin\mathcal{M}^\#$ since the functions $f_i$ satisfy the conditions of~\cite[Theorem 5.8]{PPKZ2023}.
				\end{example}
				\begin{openproblem}
					Find explicit infinite families of homogeneous bent functions using the dual bent condition and permutations with the $(\mathcal{A}_m)$ property.
				\end{openproblem}
\section{Conclusions}\label{sec: 6 Conclusion}
In this article, we provided construction methods of permutations of $\F_{2^m}$ with the $(\mathcal{A}_m)$ property as well as their application to the design of bent functions satisfying the dual bent condition. Most notably, concatenating Maiorana-McFarland bent functions satisfying the dual bent condition, we were able to provide a generic construction method of bent functions outside $\mathcal{M}^\#$, including an explicit method based on permutation monomials.

To conclude the paper, we give the following list of problem, solutions to which could provide a better understanding of construction methods of bent functions using the bent 4-concatenation.
\begin{enumerate}
	\item In Proposition~\ref{th: concatenating monomials}, we specified the functions $h_i$ for permutations $\pi_i$ from Theorem~\ref{th:permutA_m2nd class}, such that the dual bent condition for the corresponding Maiorana-McFarland bent functions $f_i(x,y)=T(x\pi_i(y))+h_i(y)$ is fulfilled. However, as computational results indicate, there are much more choices of such functions $h_i$. We think it would be interesting to  provide other construction methods of functions $h_i$, such that the dual bent condition for Maiorana-McFarland bent functions $f_i$ is satisfied.
	\item Similarly to Theorem~\ref{th:quadperm}, provide other explicit classes of permutation polynomials, which can be used for the construction of Maiorana-McFarland bent functions satisfying the dual bent condition. As soon as such classes are identified, show that the concatenation of corresponding Maiorana-McFarland bent functions is outside $\cM^\#$.
	\item As results in Section~\ref{sec: 4} indicate, the strict conditions of Theorem~\ref{th: Outside MM} can be relaxed. In this way, we propose to relax the conditions imposed on the permutations used in Theorem~\ref{th: Outside MM} and provide other generic construction methods of bent functions outside $\cM^\#$ using concatenation of Maiorana-McFarland functions.
\end{enumerate} 

\section*{Acknowledgments} 
Enes Pasalic is supported in part by the Slovenian Research Agency (research program P1-0404 and research projects J1-1694, N1-0159, J1-2451 and J1-4084). Fengrong Zhang is supported in part by the Natural Science Foundation of China (No. 61972400 and No. 62372346), the Fundamental Research Funds for the Central Universities (XJS221503), and the Youth Innovation Team of Shaanxi Universities. \ \\



\end{document}